\documentclass[12pt, two]{article}
\usepackage{}

\usepackage{CJK}
\usepackage{amsthm}
\usepackage{amsmath,amssymb,mathrsfs}
\usepackage{amsfonts}
\usepackage{yhmath}
\usepackage{graphics}
\usepackage{amssymb}
\usepackage{epsfig}
\usepackage{latexsym}
\usepackage{indentfirst}
\allowdisplaybreaks
\usepackage[top=1in, bottom=1in, left=1.25in, right=1.25in]{geometry}
\numberwithin{equation}{section}
\usepackage{CJK}

\newtheorem{theorem}{Theorem}[section]
\newtheorem{lemma}{Lemma}[section]

\newtheorem{corollary}{Corollary}[section]
\newtheorem{remark}{Remark}[section]

\numberwithin{equation}{section} %¹«Ê½Ëæ½Ú±àºÅ

\title{
Spectral Bernstein theorems for submanifolds in Euclidean spaces
\footnotetext[0]{2010 Mathematics Subject Classification. Primary: 58J50, 53C20, 53C42. }
\footnotetext[0]{ $^{*}$Supported by NSFC grant No. 12171091.}
\footnotetext[0]{  ${}^{\dag}$ Partially supported by NSFC grant No. 11831005.}
\footnotetext[0]{{\em Key words and phrases.  Essential spectrum, Submanifolds.}}
}
\author{Yuxin Dong$^{*}$, Hezi Lin$^{\dag}$, Wei Zhang }
\date{}
\begin{document}
%\begin{CJK*}{GBK}{kai}

\maketitle

\begin{abstract}
In this paper, we consider the essential spectrum of submanifolds in Euclidean spaces under various geometric hypotheses. Our results involve extrinsic conditions such as finite total mean curvature, the convergence of the gradient of the extrinsic distance, and the extrinsic volume growth or the pinching curvature. In particular, we prove that the essential spectrum of a complete non-compact submanifold $M^n$ in a Euclidean space is $[0, +\infty)$ provided the second fundamental form $A$ of $M^n$ satisfies $\|A\|_{L^p} < \infty$,  $p>n$.
\end{abstract}

\section{Introduction}
The Laplace operator $-\triangle$ on a complete non-compact manifold $(M, g)$ is essentially self-adjoint on the domain $C^{\infty}_0(M)$, and thus it admits a unique self-adjoint extension to $L^2(M)$. The spectrum of $-\triangle$ consists of all $\lambda\geq 0$ such that $\triangle + \lambda I$ does not have bounded inverse.
Sometimes we denote it by $\sigma(M)$, which can be decomposed as
\begin{equation*}
\sigma(M)=\sigma_p(M)\cup \sigma_{ess}(M),
\end{equation*}
where $\sigma_p(M)$ is the point spectrum, formed by eigenvalues with finite multiplicity, and $\sigma_{ess}(M)$ is the essential spectrum, formed by accumulation points of the spectrum and by the eigenvalues with infinite multiplicity. Thus the essential spectrum is a closed subset of $[0, +\infty)$. The spectrum of $-\triangle$ on a complete non-compact manifold $M$ has been proved to be closely related to the geometric and topological properties of $M$.

Since the Euclidean space $\mathbb{R}^{m}$ has only essential spectrum consisting of the whole half line $[0, +\infty)$, we may naturally consider which extrinsic conditions ensure a submanifold of $\mathbb{R}^{m}$ having a trivial spectrum, that is, which submanifold has the same spectrum as $\mathbb{R}^{m}$. On the other hand, it is known that the essential spectrum of the Laplacian on a Riemannian manifold depends on the asymptotic behavior of the metric. Motivated by this fact, we intend to study the essential spectrum of submanifolds under global conditions.
In this article, we  consider the essential spectrum of a submanifold in Euclidean space without minimality condition.

Let $M^n$ be a complete $n$-dimensional submanifold in $\mathbb{R}^{m}$. Let $\rho(x)=\mbox{dist}_{\mathbb{R}^{m}}(x, o)$ be the distance function from some reference origin $o\in M$ in $\mathbb{R}^{m}$, and denote by $\rho$ the restriction $\rho|_{M^n}: M^n \rightarrow [0, +\infty)$.  This restriction is then called the extrinsic distance function from $o$ in $M^n$.
Denote
\begin{align*}
D(r)=\{x \in M: 0\leq \rho(x)\leq r\} \ \ \mbox{and}\ \  V_M(r)=\mbox{vol}(D(r)).
\end{align*}
If for any $\epsilon>0$, there exists a constant $c(\epsilon)$ such that
\begin{equation*}
V(r) \leq c(\epsilon) e^{\epsilon r}V(1)
\end{equation*}
for any $r>0$, then we say that the extrinsic volume of $M$ grows uniformly sub-exponentially.
Let $r(x)=d(x, o)$ be the intrinsic distance function from $o\in M$. Then $r(x)$ is a Lipschitz function and is smooth on $M\setminus \{o, \mbox{cut}(o)\}$, where $\mbox{cut}(o)$ is the cut locus of $o$. It is well known that $\triangle r$ exists on $M\setminus \{o\}$ in the sense of distribution and $\triangle r$ is locally integrable away from $o$ (see \cite{CL}).
Denote by
\begin{align*}
B(t)=B_o(t)\ \ \mbox{and}\ \ V(t)=\mbox{vol}(B_o(t))
\end{align*}
the geodesic ball of radius $t$ at $o$ and its volume respectively.

Our first main result explores the essential spectrum of submanifolds with finite total mean curvature. By employing the extrinsic distance function, we construct approximate eigenfunctions and obtain the following result.
\begin{theorem} (cf. Theorem \ref{thm-1})
Let $M^n$ be a complete non-compact properly immersed submanifold in $\mathbb{R}^{m}$ with $\|H\|_{L^n} < \infty$. Assume that $|\bigtriangledown \rho| \rightarrow \gamma$ as $\rho \rightarrow \infty$ for some $\gamma >0$ and the extrinsic volume of $M$ grows uniformly sub-exponentially. Then the $L^2$ essential spectrum of $M$ is $[0, +\infty)$.
\end{theorem}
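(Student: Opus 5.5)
We use the Weyl criterion for the essential spectrum. It suffices to show: for every $\lambda>0$ and every $\epsilon>0$, and every compact $K\subset M$, there is $\phi\in C_0^\infty(M\setminus K)$ with
\begin{equation*}
\|(\triangle+\lambda)\phi\|_{L^2}\le \epsilon\|\phi\|_{L^2}.
\end{equation*}
Such a family yields a Weyl sequence with disjoint supports, hence $\lambda\in\sigma_{ess}(M)$. Since the essential spectrum is closed, $0$ is then included as well.

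\textbf{Construction.} We build approximate eigenfunctions from the extrinsic distance. Set $\phi=\chi(\rho)e^{i a\rho}$ with $a=\sqrt{\lambda}/\gamma$, where $\chi$ is a cutoff supported in an annulus $\{R\le\rho\le R+L\}$ of the extrinsic variable. Take $\chi(t)=\psi((t-R)/L)$ for a fixed bump $\psi$. Alternatively, one can work with real parts $\chi(\rho)\cos(a\rho)$. Properness guarantees that $D(r)$ is compact, so $\phi$ has compact support, and choosing $R$ large makes it avoid $K$.

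\textbf{Computing the error.} The key identity is $\triangle \rho^2/2 = n + \langle \vec H,x\rangle$ (with $\vec H$ unnormalized). Hence
\begin{equation*}
\triangle\rho=\frac{n-|\nabla\rho|^2}{\rho}+\frac{\langle \vec H,x\rangle}{\rho},\qquad |\triangle\rho|\le\frac{n}{\rho}+|H|.
\end{equation*}
Then
\begin{equation*}
(\triangle+\lambda)\phi=e^{ia\rho}\Big[\chi''|\nabla\rho|^2+2ia\chi'|\nabla\rho|^2+(\chi'+ia\chi)\triangle\rho+\chi\big(\lambda-a^2|\nabla\rho|^2\big)\Big].
\end{equation*}
We estimate each term against $\|\chi(\rho)\|_{L^2}$:
\begin{itemize}
\item[(i)] Since $|\nabla\rho|\to\gamma$, the term $\lambda-a^2|\nabla\rho|^2=a^2(\gamma^2-|\nabla\rho|^2)$ is uniformly small for $R$ large.
\item[(ii)] The terms with $\chi',\chi''$ are $O(1/L)$ pointwise on the support.
\item[(iii)] The term $n/\rho\le n/R$ is small.
\item[(iv)] The mean curvature term is $\|\chi |H|\|_{L^2}$. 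By H\"older,
\begin{equation*}
\|\chi|H|\|_{L^2}\le \|H\|_{L^n(D(R+L)\setminus D(R))}\,\|\chi\|_{L^{2n/(n-2)}}.
\end{equation*}
This is where the difficulty lies: $\|\chi\|_{L^{2n/(n-2)}}$ is not directly controlled by $\|\chi\|_{L^2}$. For $n\ge3$ we would use the Michael–Simon Sobolev inequality on submanifolds, $\|f\|_{L^{n/(n-1)}}\le C(\|\nabla f\|_{L^1}+\|Hf\|_{L^1})$, applied to $f=|\chi|^{2(n-1)/(n-2)}$. This gives
\begin{equation*}
\|\chi\|_{L^{2n/(n-2)}}\le C\big(\|\nabla\chi\|_{L^2}+\|H\chi\|_{L^2}\big).
\end{equation*}
Since the tail $\|H\|_{L^n}$ outside $D(R)$ tends to $0$, the term $\|H\chi\|_{L^2}$ can be absorbed. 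This yields $\|H\chi\|_{L^2}\le o(1)\,\|\nabla\chi\|_{L^2}\le o(1)\,L^{-1}\|\chi_{\text{wider}}\|_{L^2}$. For $n=2$ a similar argument works with a different exponent.
\end{itemize}

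\textbf{Comparing annuli (main obstacle).} All the errors are bounded by quantities like $\epsilon\,\mathrm{vol}(D(R+L)\setminus D(R))^{1/2}$, while $\|\phi\|_{L^2}$ is only bounded below by the volume of the inner annulus where $\chi\ge 1/2$, say $D(R+3L/4)\setminus D(R+L/4)$. To compare these, we use uniform sub-exponential growth of $V_M$ in the standard Weyl-sequence argument (as in Donnelly or Charalambous–Lu).

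Argue by contradiction. Suppose that for all large $R$ the volume of the outer annulus exceeds $\delta^{-1}$ times the volume of the inner annulus. Iterating over dyadic-in-$L$ steps then forces $V_M(r)\ge c\,e^{\eta r}$ for some $\eta>0$ depending on $\delta,L$. This contradicts sub-exponential growth once $\epsilon$ is chosen small relative to $\eta$.

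Hence for suitable $R\to\infty$ the ratio is bounded, and $\|(\triangle+\lambda)\phi\|\le C(1/L+o(1))\|\phi\|$. Letting $L\to\infty$ after choosing good $R$ completes the proof, provided the choice of $R$ is made after fixing $L$ and compatibly with the bump function; ensuring this compatibility is the delicate bookkeeping step.
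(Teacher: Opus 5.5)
Your route differs from the paper's in the key analytic step. The paper does not use the $L^2$ Weyl criterion. It uses the Charalambous--Lu criterion (Lemma \ref{criteria}), $\|u\|_{L^\infty}\|\triangle u+\lambda u\|_{L^1}\le\epsilon\|u\|_{L^2}^2$. Since $|\psi_{ab}|\le 1$, only the $L^1$ norm of the error matters, and the mean-curvature term is handled by plain H\"older,
\begin{equation*}
\int_{D(b+R)\setminus D(a-R)}|H|\,dv\le \|H\|_{L^n(M\setminus D(a-R))}\,V_M(b+R)^{1-\frac{1}{n}}\le o(1)\,V_M(b+R)\qquad (V_M(b+R)\ge 1).
\end{equation*}
This is combined with a long plateau $[a,b]$, so that $\|\psi_{ab}\|^2_{L^2}\ge V_M(b)-V_M(a)$ is comparable to the whole extrinsic volume. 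No Sobolev inequality and no case distinction in $n$ are needed. Working in $L^2$, you must bound $\|H\chi\|_{L^2}$, which $H\in L^n$ does not give by H\"older alone. Your Michael--Simon absorption does this correctly for $n\ge 3$, even with the sharper bound $o(1)\|\nabla\chi\|_{L^2}$, but at the cost of a much heavier geometric input. The ansatz $\chi(\rho)e^{ia\rho}$, the formula for $\triangle\rho$, and the selection of good radii via sub-exponential growth are as in the paper.

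There is one genuine gap: you never show $\mathrm{vol}(M)=\infty$, and your annulus comparison is false without it. Sub-exponential growth is only an upper bound. A profile whose shell volumes decay super-exponentially (e.g.\ $\rho$-density $e^{-t^2}$) satisfies ``outer $>\delta^{-1}\cdot$ inner for every large $R$'' while $V_M$ stays bounded, so no exponential lower bound is forced. The paper takes infinite volume from \cite{CMV}. You can also get it from Michael--Simon: on an intrinsic ball $B_x(s)\subset M\setminus D(R)$ where $\|H\|_{L^n}$ is small, the absorbed inequality gives $\mathrm{vol}(B_x(s))\ge c(n)s^n$.

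With infinite volume the selection is clean. Set $m_j=\mathrm{vol}\{jL/2\le\rho<(j+1)L/2\}$ and suppose $m_{j-1}+m_{j+1}>3m_j$ for all large $j$. If some large $j_1$ has $m_{j_1}\ge m_{j_1-1}$, then $m_{j+1}>2m_j$ for all $j\ge j_1$, which is exponential growth. Otherwise $m_j<m_{j-1}$ eventually, which forces $m_j<\frac12 m_{j-1}$, hence finite volume. So for infinitely many $j$, the choice $R=jL/2-L/4$ gives an outer-to-inner volume ratio at most $4$. This constant is independent of $L$, so you fix $L$ first and then $R$; there is no compatibility issue.

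The same volume bound is what $n=2$ actually needs; your ``different exponent'' remark does not work there. With $H\in L^2$ you only get the absolute bound $\|H\chi\|_{L^2}\le\|H\|_{L^2(M\setminus D(R))}$. This must be compared with $\|\phi\|_{L^2}\ge\frac12\mathrm{vol}(B_x(L/4))^{1/2}\ge cL$, where $x$ is a point with $\rho(x)=R+L/2$; the inner annulus contains $B_x(L/4)$ because $|\nabla\rho|\le 1$. Finally, handle the $\chi' H$ term by writing $\chi=\zeta^2$, so that $|\chi'|\le CL^{-1}\zeta$, and running the absorption for $\zeta$ instead of $\chi$.
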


By constructing approximate eigenfunctions using the intrinsic distance function, we prove the following spectral Bernstein theorem under the assumption that the submanifold is asymptotically totally umbilical in some average sense at infinity.
\begin{theorem} (cf. Theorem \ref{thm-2})
Let $M^n$ be a complete non-compact submanifold in $\mathbb{R}^{m}$. Suppose the traceless second fundamental form $\Phi$ of $M$ satisfies
\begin{align*}
\underset{t \rightarrow \infty}\lim \frac{1}{V(t)}
\int_{B(t)} |\Phi|^p dv=0  \label{ft}
\end{align*}
for some $p>n$. Then the $L^2$ essential spectrum of $M$ is $[0, +\infty)$.
\end{theorem}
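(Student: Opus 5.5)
We prove Theorem 1.2 by exhibiting, for each $\lambda>0$, a Weyl sequence: functions $\phi_k\in C_0^\infty(M)$ with supports escaping to infinity and
\[
\frac{\|\triangle\phi_k+\lambda\phi_k\|_{L^2}}{\|\phi_k\|_{L^2}}\to 0 .
\]
Since $\sigma_{ess}(M)$ is closed, this gives $(0,\infty)\subset\sigma_{ess}(M)$, hence $[0,\infty)=\sigma_{ess}(M)$. The natural test functions are $\phi=\psi(r)e^{i\sqrt{\lambda}\,r}$, where $r$ is the intrinsic distance from $o$ and $\psi$ is a cutoff adapted to an annulus $B(t_2)\setminus B(t_1)$. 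A direct computation gives
\[
\triangle\phi+\lambda\phi=\Bigl(\psi''+2i\sqrt{\lambda}\,\psi'+i\sqrt{\lambda}\,\psi\,\triangle r+\psi'\,\triangle r\Bigr)e^{i\sqrt{\lambda}r}.
\]
The $\psi',\psi''$ terms are small once $\psi$ varies on long scales. The whole difficulty is therefore to show that $\triangle r$ is small in an averaged $L^2$ sense on large annuli.

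\textbf{Step 1: control $\triangle r$ by curvature.} The Gauss equation gives
\[
\mathrm{Ric}\ge -C(n)|\Phi|^2+\tfrac{n-1}{n}|H|^2\ge -C|\Phi|^2 .
\]
Indeed, for a unit vector $e$, $\mathrm{Ric}(e,e)=\langle H,A(e,e)\rangle-\sum_j|A(e,e_j)|^2$. Writing $A=\Phi+\frac{H}{n}g$ and expanding, the umbilic part contributes $\frac{n-1}{n^2}|H|^2$, and the cross terms are absorbed. The Ricci lower bound is then a function $-C|\Phi|^2$ that is small on average but not pointwise. Along a minimizing geodesic, with $m=\triangle r$, the Riccati inequality reads
\[
m'+\frac{m^2}{n-1}\le C|\Phi|^2 .
\]
We need both an upper bound and an integral bound on $m_+$, in the spirit of Petersen–Wei integral comparison, where $\int\rho_-^p$ for $p>n/2$ controls $\int (m-\frac{n-1}{r})_+^{2p}$. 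Since $p>n$ here, this is comfortably within range, and we obtain
\[
\int_{B(t)}\Bigl(\triangle r-\tfrac{n-1}{r}\Bigr)_+^{2p}\le C\int_{B(t)}|\Phi|^{2p},
\]
possibly via Hölder with the exponent $p$ instead of $2p$. Combined with $\frac{1}{V(t)}\int_{B(t)}|\Phi|^p\to0$, this gives $\frac{1}{V(t)}\int_{B(t)}(\triangle r)_+^{2}\to 0$ as $t\to\infty$, because the $\frac{n-1}{r}$ term is negligible on far annuli.

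\textbf{Step 2: handle the negative part of $\triangle r$ through the distributional identity.} The identity $\int\psi^2\triangle r=-\int 2\psi\psi'|\nabla r|^2$ holds because the cut-locus contributes a nonpositive singular part, which has the favorable sign. It bounds $\int\psi^2(\triangle r)_-$ by $\int\psi^2(\triangle r)_+$ plus $\int|\psi\psi'|$. I expect the main obstacle to be upgrading this $L^1$ control of $(\triangle r)_-$ to the $L^2$ control that $\|\psi\,\triangle r\|_{L^2}$ requires. I would first try the multiplier trick: multiply the Riccati inequality by $\psi^2$ and integrate along $M$ using $\partial_r m=\langle\nabla m,\nabla r\rangle$. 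This yields
\[
\int\psi^2 m^2\lesssim\int\psi^2|\Phi|^2+\int|\psi\psi'|\,|m|,
\]
and Cauchy–Schwarz absorbs the last term. The singular part of $\triangle r$ at the cut locus again has the right sign for this. The $|\Phi|^2$ term is handled by Hölder together with the hypothesis that the average of $|\Phi|^p$ tends to zero.

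\textbf{Step 3: choose the annuli.} The hypothesis gives $\int_{B(t)}|\Phi|^2\le \epsilon(t)\,V(t)$ with $\epsilon(t)\to0$, so every error term is $o(V(t_2))$. We need $\|\phi\|^2\gtrsim \mathrm{vol}(B(t_2)\setminus B(t_1))$ to be comparable to $V(t_2)$, and $\psi$ must have length scale $\to\infty$. Take $\psi=1$ on $[t,2t]$, supported in $[t-\ell, 2t+\ell]$, with $\psi'\sim1/\ell$. If the volume growth is subexponential, we can pick $t_k$ with $V(2t_k)\ge 2V(t_k)$ infinitely often and with $V(2t_k+\ell)\le CV(2t_k)$, which is the standard argument. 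If instead growth is exponential, then $\frac{1}{V}\int|\Phi|^p\to0$ still forces Ricci to be almost nonnegative on average, and I would exclude this case by a Bishop-type integral comparison, which gives polynomial growth; this step needs checking. Finally, we make the supports disjoint and escaping by taking $t_k\to\infty$ rapidly, which completes the Weyl sequence.
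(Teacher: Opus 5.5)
\textbf{The gap: $L^2$ control of $\triangle r$.} Your Steps 1 and 2 are sound as $L^1$ statements. The problem is that the classical Weyl criterion forces you to control $\triangle r$ on annuli in $L^2$, and that is where the argument breaks. The multiplier trick you propose does not work. In polar coordinates, with $m=\triangle r$, $\omega'=m\omega$, and $\psi$ vanishing near $r=0$,
\[
\int_0^{c(\theta)}\psi^2 m'\,\omega\,dr=\big[\psi^2 m\,\omega\big]_{r=c(\theta)}-\int_0^{c(\theta)}\bigl(2\psi\psi' m+\psi^2m^2\bigr)\omega\,dr .
\]
Multiplying $m'+\frac{m^2}{n-1}\le C|\Phi|^2$ by $\psi^2\omega$ and integrating therefore gives
\[
\Bigl(\frac{1}{n-1}-1\Bigr)\int_M\psi^2m^2\,dv\le C\int_M\psi^2|\Phi|^2\,dv+2\int_M\psi\psi' m\,dv-\int_{S^{n-1}}\psi^2m\,\omega\big|_{r=c(\theta)}\,d\theta .
\]
The left side is nonpositive for $n\ge 2$, so nothing is absorbed.

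This is not a technicality. The Riccati inequality bounds $m$ only from above, and $m\to-\infty$ at conjugate points, possibly in a non-square-integrable way. For example, on the round $S^2$, where $\Phi\equiv 0$, $\triangle r=\cot r$ is not in $L^2$ near the antipode. Relatedly, $\psi(r)e^{i\sqrt{\lambda}r}$ is in general not in the domain of the $L^2$ Laplacian, because the distributional $\triangle r$ carries a singular measure on the cut locus. So the Weyl-sequence route stalls exactly at the point you flagged.

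\textbf{The fix: change the criterion, not the estimate.} Charalambous--Lu's criterion (Lemma \ref{criteria}) only asks for $\|u\|_{L^\infty}\|\triangle u+\lambda u\|_{L^1}\le\epsilon\|u\|_{L^2}^2$. Take $u=\phi(\widetilde r/R)e^{\sqrt{-1}\sqrt{\lambda}\,\widetilde r}$, where $\widetilde r$ is the smoothed distance of Lemma \ref{r-smooth}. Smoothing puts $u$ in the domain and costs only $L^1$-small errors in $\nabla\widetilde r-\nabla r$ and $\triangle\widetilde r-\triangle r$. Then you need only that $\int|\triangle r|$ over each annulus is $o(\mathrm{volume})$. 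Your Steps 1--2 give exactly this, which is the content of Lemma \ref{h-decay}:
\begin{itemize}
\item for $m_+$, apply Petersen--Wei with $q=p/2>n/2$ and then H\"older;
\item for $m_-$, use the favorable sign of the cut-locus part.
\end{itemize}

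\textbf{Step 3 also needs repair.} The exponential-growth case is not excluded by a polynomial growth bound; that bound is not established and is not needed. First, $\mathrm{vol}(M)<\infty$ is impossible: the hypothesis would then force $\Phi\equiv0$, making $M$ an affine plane. Given infinite volume, integrating $\omega'=h\omega$ and using the same Petersen--Wei bound yields $\frac{d}{dt}\log V(t)\to 0$. Then choose the annuli by volume fractions, e.g.\ $V(a_i)=\frac14V(b_i)$ as in Lemma \ref{volume-estimate} and Theorem \ref{thm-2}. This gives $b_i-a_i\to\infty$ and $\|u\|_{L^2}^2\ge\frac14V(b_i)$ with no case distinction. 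Your dyadic choice with $V(2t_k)\ge 2V(t_k)$ is not guaranteed for slowly growing volume.
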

Cavalcante,  Mirandola and Vit\'{o}rio \cite{CMV} proved that if a complete non-compact manifold $M^n$ isometrically immerses in a manifold with bounded geometry, and $\|H\|_{L^{p}} < \infty$ for some $n \leq p \leq +\infty$, then $M$ has infinite volume. Using this result, we prove the following  spectral Bernstein theorem, only by assuming $L^p$ integral condition on the second fundamental form $A$.
\begin{corollary} (cf. Corollary \ref{total-mean})
Let $M^n$ be a complete non-compact submanifold in $\mathbb{R}^{m}$ with $\|A\|_{L^p} < \infty$ for some $p>n$. Then the $L^2$ essential spectrum of $M$ is $[0, +\infty)$.
\end{corollary}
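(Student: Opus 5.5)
We derive the corollary from Theorem \ref{thm-2}. It suffices to show that the traceless second fundamental form $\Phi$ satisfies
\[
\lim_{t\to\infty}\frac{1}{V(t)}\int_{B(t)}|\Phi|^p\,dv=0
\]
for the given $p>n$.

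First, pointwise we have $|\Phi|^2=|A|^2-n|H|^2\le |A|^2$, where $H$ is normalized as the mean curvature vector, $\frac1n\mathrm{tr}A$. Hence
\[
\int_{B(t)}|\Phi|^p\,dv\le \int_M |A|^p\,dv=\|A\|_{L^p}^p<\infty
\]
for every $t$. The numerator is therefore bounded uniformly in $t$, and the limit vanishes as soon as $V(t)\to\infty$, i.e. as soon as $M$ has infinite volume.

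Second, we obtain infinite volume from the result of Cavalcante, Mirandola and Vit\'{o}rio \cite{CMV} quoted above. The ambient space $\mathbb{R}^m$ has bounded geometry: it is flat and its injectivity radius is infinite. Also $|H|\le \frac{1}{\sqrt n}|A|$, so $\|H\|_{L^p}\le n^{-1/2}\|A\|_{L^p}<\infty$ with $p>n$, which lies in the admissible range $n\le p\le+\infty$. Since $M$ is complete and non-compact, \cite{CMV} gives $\mathrm{vol}(M)=\infty$. Thus $V(t)=\mathrm{vol}(B_o(t))\to\infty$, because the balls $B_o(t)$ exhaust $M$ by completeness.

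Combining the two steps, the averaged integral condition of Theorem \ref{thm-2} holds with the same $p>n$. Theorem \ref{thm-2} then gives $\sigma_{ess}(M)=[0,+\infty)$.

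The only point needing care is that the hypotheses of \cite{CMV} match. We need the isometric immersion to be into a bounded-geometry manifold, which $\mathbb{R}^m$ is, and we need no properness assumption. All the real work sits inside Theorem \ref{thm-2} and \cite{CMV}; given these, the argument is routine.
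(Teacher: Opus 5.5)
Your proposal is correct and follows the paper's own proof. Like the paper, you bound $|\Phi|\le|A|$ and $|H|\le n^{-1/2}|A|$, invoke \cite{CMV} to get infinite volume so that the averaged $L^p$ norm of $\Phi$ over $B(t)$ tends to zero, and then apply Theorem \ref{thm-2}. Your remark that the balls $B_o(t)$ exhaust $M$, so that $V(t)\to\infty$, is a small detail the paper leaves implicit.
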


There is an abundant literature studying the spectrum of complete Riemannian manifolds under various geometric conditions. We briefly overview some of the main achievements.
Donnelly \cite{Do} showed
\begin{equation*}
\sigma_{ess}(M)=\Big[\frac{(n-1)^2c^2}{4}, +\infty)
\end{equation*}
for a Hadamard manifold $M^n$ whose sectional curvature approaches $-c^2$ at infinity.
In 1997, Kumura \cite{Ku1} established the following result: let $r$ be the distance from a pole (i.e., a point $x \in M$ where the exponential map $exp_x: T_xM\rightarrow M$ is a diffeomorphism), then $\sigma_{ess}(M)=[\frac{c^2}{4}, +\infty)$ provided
\begin{equation*}
\underset{n\rightarrow \infty}{\lim}\underset{r\geq n}{\sup}|\triangle r-c|=0.
\end{equation*}
On the other hand, via a modification of Petersen-Wei's integral inequality \cite{PW},  Kumura \cite{Ku2} weakened the curvature requirements, by replacing the pointwise pinching on the sectional curvature with a combination of a lower bound on a suitably weighted volume and an $L^p$-bound on the Ricci curvature.
Without assuming the existence of a pole, Wang \cite{Wa} showed that if
\begin{equation*}
\mbox{Ric}(x)\geq -\delta(n)\frac{1}{r^2(x)}
\end{equation*}
where $\delta(n)$ is a small constant depending on the dimension and $r(x)$ is the distance to a fixed point, then the $L^p$ essential spectrum is $[0, +\infty)$ for all $p\in [1, +\infty]$, by using a result proved by Sturm \cite{St}. This work was extended by Lu and Zhou \cite{LZ} who proved that  the $L^p$ essential spectrum  is $[0, +\infty)$ under the assumption that
\begin{equation*}
\underset{x \rightarrow \infty}{\liminf} \mbox{Ric}_M(x)=0.
\end{equation*}

It would be interesting to understand the relations between the spectrum and the extrinsic geometry of immersed submanifolds in various spaces. The relevant geometric restrictions are related to pointwise or integral curvature bounds of the ambient spaces, and the second fundamental form of the submanifolds. When the submanifolds have extrinsic bounds, their spectrums have been studied by many mathematicians. Bessa, Jorge and Montenegro \cite{BJM} proved that if a minimal submanifold is properly immersed in a ball of $\mathbb{R}^{m}$, then there exists only pure point spectrum. Bessa, Jorge and Mari \cite{BJMa} investigated some of the relations between the spectrum of a non-compact, extrinsically bounded submanifold and the Hausdorff dimension of its limit set.

The situation for unbounded submanifolds will, however, be different in general. P. Castillon \cite{Ca} determined the essential spectrum of the Laplacian and the stability operator of a submanifold in the hyperbolic space with constant mean curvature $|H|<1$ and finite total curvature. In \cite{FS}, Freitas and Salavessa showed that the essential spectrum of a complete minimal properly immersed hypersurface in $\mathbb{R}^{m}$ consists of the half line $[0, +\infty)$, under a volume growth condition on extrinsic balls and a condition on the unit normal at infinity. Lima, Mari, Montenegro and Vieira \cite{LMMV} characterized $\sigma_{ess}(M)$ for a minimal properly immersed submanifold $M$ in $N$ which has a pointwise or an integral pinching to a space form, under a mild volume assumption. In their proof, the minimality condition enters in a crucial and subtle way.

The paper is organized as follows. In Section 2, we give some necessary notations for submanifold geometry, and recall  Charalambous-Lu's criterion. For later use, we prove smooth approximations of distance function in the integral sense, which is suit for integral curvature assumption. In Section 3, we prove a spectral Bernstein theorem for submanifolds without minimality condition. In Section 4, we present a sequence of lemmas which involve volume estimate and the integral decay of the Laplacian of distance function. Thanks to these estimates we can finally prove the main theorem by  Charalambous-Lu's criterion.

\section{Preliminaries}
Let $M^n$ be a complete $n$-dimensional submanifold in an $m$-dimensional Riemannian manifold $N^{m}$. Fix a point $x \in M^n$ and a local orthonormal frame $\{e_1, \cdots, e_{m}\}$ of $N^{m}$ such that $\{e_1, \cdots, e_{n}\}$ are tangent fields of $M^n$ at $x$. Denote by $\nabla$ and $\overline{\nabla}$ the Riemannian connection of $M^n$ and $N^{m}$ respectively.
For each $\alpha$, $n+1\leq \alpha \leq m$, define a linear map $A_{\alpha}: T_xM \rightarrow T_xM$ by
\begin{equation*}
\langle A_{\alpha} X, Y\rangle= \langle \overline{\nabla}_XY, e_{\alpha}\rangle,
\end{equation*}
where $X, Y\in T_xM$. Let $A: T_xM \times T_xM\rightarrow (T_xM)^\perp$ denote the second fundamental form of $M$ defined by
\begin{equation*}
A(X, Y)= \overset{m}{\underset{\alpha=n+1}{\sum}}\langle A_\alpha X, Y\rangle e_\alpha.
\end{equation*}
Denote by $h^\alpha_{ij} =\langle A_{\alpha} e_i, e_j\rangle $ the components of the second fundamental form.
The squared norm $|A|^2$ of the second fundamental form and the mean curvature vector $H$  are defined by:
\begin{equation*}
|A|^2=\underset{\alpha}{\sum}tr(A_\alpha^2)=\underset{\alpha, i,j}{\sum}(h^\alpha_{ij})^2 \ \ \  \ \ \
\mbox{and} \ \ \ \ \ \  H=\frac{1}{n}\underset{\alpha}{\sum}tr(A_\alpha)e_{\alpha}=\frac{1}{n}\underset{\alpha, i}{\sum}h^\alpha_{ii}e_{\alpha}
\end{equation*}
respectively. The traceless second fundamental form  is defined by
\begin{equation*}
\Phi=\overset{m}{\underset{\alpha=n+1}{\sum}}(A^\alpha_{ij}-  H^\alpha g_{ij}) e_\alpha.
\end{equation*}
Its norm satisfies
\begin{equation*}
|\Phi|^2=|A|^2-n|H|^2,
\end{equation*}
which measures how much the immersion deviates from being totally umbilical.
The Ricci curvature of $M$ is given by the Gauss equation
\begin{equation*}
 R_{ijkl}= \bar{R}_{ijkl} + h^\alpha_{ik}h^\alpha_{jl}-h^\alpha_{il}h^\alpha_{jk},
\end{equation*}
where $R_{ijkl}$ and $\bar{R}_{ijkl}$ are the components of the curvature tensors of $M^n$ and $N^{m}$, respectively. We say that  $N^{m}$ has nonnegative $(n-1)$-th Ricci curvature if for any $x \in N$ and any $n$ orthonormal vectors $\{e, e_1, \cdots, e_{n-1}\} \subset T_x N$, the curvature tensor
satisfies $\sum_{i=1}^{n-1}\langle R(e_i, e)e, e_i\rangle \geq 0$.

The following lemma gives a lower bound estimate for the Ricci curvature of a submanifold in a manifold which has nonnegative $(n-1)$-th Ricci curvature.
\begin{lemma}\label{Lin} (\cite{Lin})
Let $M^n$  be a submanifold immersed in a Riemannian manifold $N^{m}$ which has nonnegative $(n-1)$-th Ricci curvature. Then the Ricci curvature of $M^n$ satisfies
\begin{equation}
\mbox{Ric} \geq - \frac{n}{4}|\Phi|^{2}. \label{Ric-1}
\end{equation}
Assume
$(n-1) |A|^2 \leq n^2|H|^2$.
 Then
$\mbox{Ric} \geq 0$.
\end{lemma}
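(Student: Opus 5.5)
The plan is to compute pointwise, with the Gauss equation and a well-chosen orthonormal frame. Fix $x\in M$ and a unit vector $e\in T_xM$, and complete it to an orthonormal basis $e=e_1,e_2,\dots,e_n$ of $T_xM$. Contracting the Gauss equation gives
\begin{equation*}
\mbox{Ric}(e,e)=\sum_{i=2}^{n}\bar R(e_i,e_1,e_1,e_i)+\sum_{\alpha}\Big(h^\alpha_{11}\sum_{i}h^\alpha_{ii}-\sum_{i}(h^\alpha_{1i})^2\Big).
\end{equation*}
The first sum is nonnegative by the assumption that $N$ has nonnegative $(n-1)$-th Ricci curvature, since $\{e,e_2,\dots,e_n\}$ are $n$ orthonormal vectors. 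So it suffices to bound the second-fundamental-form term from below for each normal direction $\alpha$ separately, and then sum over $\alpha$.

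For a fixed $\alpha$, write $h^\alpha_{ij}=\Phi^\alpha_{ij}+H^\alpha\delta_{ij}$ with $\mathrm{tr}\,\Phi^\alpha=0$, so that $\sum_i h^\alpha_{ii}=nH^\alpha$. Substituting gives
\begin{equation*}
h^\alpha_{11}\, nH^\alpha-\sum_i (h^\alpha_{1i})^2=(n-2)H^\alpha\Phi^\alpha_{11}+(n-1)(H^\alpha)^2-(\Phi^\alpha_{11})^2-\sum_{i\ge2}(\Phi^\alpha_{1i})^2.
\end{equation*}
To control $\Phi^\alpha_{11}$ and the off-diagonal entries, I would use the tracelessness. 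With $a=\Phi^\alpha_{11}$, we have $\sum_{i\ge2}\Phi^\alpha_{ii}=-a$, so Cauchy--Schwarz gives $\sum_{i\ge2}(\Phi^\alpha_{ii})^2\ge a^2/(n-1)$. Hence
\begin{equation*}
|\Phi^\alpha|^2\ \ge\ a^2+2\sum_{i\ge2}(\Phi^\alpha_{1i})^2+\frac{a^2}{n-1}.
\end{equation*}
This yields $a^2+\sum_{i\ge2}(\Phi^\alpha_{1i})^2\le \frac{n-1}{n}|\Phi^\alpha|^2$, with the sharper control $a^2\le\frac{n-1}{n}|\Phi^\alpha|^2$ used when the off-diagonal terms vanish. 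Optimally one diagonalizes $\Phi^\alpha$ in a direction-dependent way, but the bound above is enough.

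Next I minimize the quadratic expression in $H^\alpha$. Writing $h=H^\alpha$ and $\phi=|\Phi^\alpha|$, the term $(n-1)h^2+(n-2)ha$ is minimized at $h=-\frac{(n-2)a}{2(n-1)}$, with minimum value $-\frac{(n-2)^2a^2}{4(n-1)}$. So the $\alpha$-term is at least
\begin{equation*}
-\frac{(n-2)^2a^2}{4(n-1)}-\Big(a^2+\sum_{i\ge2}(\Phi^\alpha_{1i})^2\Big).
\end{equation*}
Bounding $a^2$ and the bracket both by $\frac{n-1}{n}\phi^2$ gives a lower bound of the form $-\big(\frac{(n-2)^2}{4n}+\frac{n-1}{n}\big)\phi^2=-\frac{n^2}{4n}\phi^2=-\frac n4\phi^2$. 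This is exactly the constant claimed. Summing over $\alpha$ gives $\mbox{Ric}\ge-\frac n4|\Phi|^2$.

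The main obstacle is making sure these two bounds can be used simultaneously. They can: both hold separately, since the bracket bound already dominates $a^2$. Note also that $(n-2)^2/4+(n-1)=n^2/4$, which confirms the constant. This step is where I would double-check that no cross-term was lost.

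For the second claim, I would redo the estimate without optimizing over $H$. Using $|H|^2=\sum_\alpha (H^\alpha)^2$ and summing over $\alpha$, the lower bound becomes $(n-1)|H|^2-(n-2)|H||\Phi|-\frac{n-1}{n}|\Phi|^2$ up to the same inequalities; the cross term $(n-2)H^\alpha a$ is handled by Cauchy--Schwarz over $\alpha$ together with $a^2\le\frac{n-1}{n}|\Phi^\alpha|^2$. The hypothesis $(n-1)|A|^2\le n^2|H|^2$ is equivalent to $(n-1)|\Phi|^2\le n|H|^2$. Substituting $|\Phi|\le\sqrt{n/(n-1)}\,|H|$ into this quadratic form, the form in $t=|\Phi|/|H|$ is decreasing on the admissible range. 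At the endpoint $t^2=n/(n-1)$ it evaluates to $(n-1)-(n-2)\sqrt{n/(n-1)}-1=(n-2)\big(1-\sqrt{n/(n-1)}\big)$, which is slightly negative. So the crude bound fails, and I would need the sharper eigenvalue estimate $a\le\sqrt{(n-1)/n}\,\phi$ with the correct sign. For this I would use the known Okumura-type refinement: the exact $\alpha$-wise minimization over directions, done with $\Phi^\alpha$ diagonalized, gives $\mbox{Ric}\ge\frac{n-1}{n}\big(n|H|^2-\frac{n-2}{\sqrt{n(n-1)}}\sqrt n|H||\Phi|-|\Phi|^2\big)$. This is nonnegative exactly when $|\Phi|^2\le\frac{n}{n-1}|H|^2$, i.e.\ under the hypothesis. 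Handling several normal directions at once (the cross term across different $\alpha$) is the delicate point; I would treat it by Cauchy--Schwarz in $\alpha$ as above.
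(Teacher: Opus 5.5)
Your argument for the first inequality is correct and complete. For each normal direction, your bound $a_\alpha^2+\sum_{i\ge2}(\Phi^\alpha_{1i})^2\le\frac{n-1}{n}|\Phi^\alpha|^2$ and the minimization in $H^\alpha$ give a contribution of at least $-\frac{n}{4}|\Phi^\alpha|^2$, and summing over $\alpha$ is harmless. (The paper gives no proof of this lemma; it only cites \cite{Lin}.)

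The second claim, however, is not proved. The reason you give for abandoning the direct route is an arithmetic slip. After summing over $\alpha$, your own Cauchy--Schwarz step gives
\begin{equation*}
(n-2)\sum_\alpha H^\alpha a_\alpha\ \ge\ -(n-2)|H|\Big(\sum_\alpha a_\alpha^2\Big)^{1/2}\ \ge\ -(n-2)\sqrt{\tfrac{n-1}{n}}\,|H||\Phi|,
\end{equation*}
not $-(n-2)|H||\Phi|$. You dropped the factor $\sqrt{(n-1)/n}$, and that is why your endpoint value came out negative.

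You then replace the argument by an appeal to an ``Okumura-type refinement'' that you do not prove and that is misstated. With cross coefficient $\frac{n-2}{\sqrt{n(n-1)}}\sqrt n$ the inequality is false for $n\ge3$. Take a hypersurface point in $\mathbb{R}^4$ with principal curvatures $0,\frac32,\frac32$: then $|H|=1$, $|\Phi|^2=\frac32$ and $\mbox{Ric}(e_1,e_1)=0$, while your formula would give $\frac{3-\sqrt3}{3}$. The correct coefficient has $n$ in place of $\sqrt n$. There is also no ``cross term across different $\alpha$'' to handle, since the Gauss-equation sum splits over $\alpha$.

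The repair uses only what you already have. With $u=|H|$ and $v=\sqrt{\frac{n-1}{n}}\,|\Phi|$, so that $\sum_\alpha\big(a_\alpha^2+\sum_{i\ge2}(\Phi^\alpha_{1i})^2\big)\le v^2$,
\begin{equation*}
\mbox{Ric}(e,e)\ \ge\ (n-1)u^2-(n-2)uv-v^2=\big((n-1)u+v\big)(u-v).
\end{equation*}
The hypothesis $(n-1)|A|^2\le n^2|H|^2$, i.e.\ $(n-1)|\Phi|^2\le n|H|^2$, is exactly $v\le u$, so $\mbox{Ric}\ge0$. Minimizing the same quadratic in $u$ gives $-\frac{n^2}{4(n-1)}v^2=-\frac n4|\Phi|^2$, so both assertions of the lemma follow from this single Leung-type inequality.
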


It is known that the essential spectrum of a Riemannian manifold is closely related to its volume growth.  Let us recall  the following result proved by Sturm.
\begin{lemma} \label{Stc} (\cite{St})
Let $M$ be a complete non-compact manifold whose Ricci curvature has a
lower bound. If the volume of $M$ grows uniformly sub-exponentially, then the $L^p$ spectrum are the same for all $p \in [1,\infty]$.
\end{lemma}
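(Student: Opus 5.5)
The statement is Sturm's theorem, quoted from \cite{St}. The plan I would follow is the Hempel--Voigt / Sturm strategy: show that the resolvent sets $\rho_p$ of $-\triangle$ on $L^p(M)$ all coincide. I would prove the two inclusions $\rho_2\subset\rho_p$ and $\rho_p\subset\rho_2$ separately.

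\emph{Inclusion $\rho_p\subset\rho_2$.} Since $M$ is complete, the heat semigroup $e^{t\triangle}$ is a consistent family of contractions on all $L^p$, $1\le p\le\infty$, and it is symmetric. Hence its $L^p$ and $L^{p'}$ generators are adjoint to each other, so $\sigma_p=\sigma_{p'}$. If $z\in\rho_p=\rho_{p'}$, the resolvents $(-\triangle-z)^{-1}$ on $L^p$ and on $L^{p'}$ are consistent. Riesz--Thorin interpolation then gives a bounded operator on $L^2$, and by consistency this operator is the $L^2$ resolvent. So $\rho_p\subset\rho_2$.

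\emph{Inclusion $\rho_2\subset\rho_p$.} This direction uses the geometry. First, from $\mathrm{Ric}\ge -K$ I get Li--Yau/Davies Gaussian upper bounds for the heat kernel. Bishop--Gromov gives local volume doubling, and then
\[
p_t(x,y)\le C\,V_x(\sqrt t)^{-1/2}V_y(\sqrt t)^{-1/2}\exp\!\big(Ct-\tfrac{d(x,y)^2}{(4+\delta)t}\big).
\]
Second, for $z\in\rho_2$ I would write the resolvent as a contour integral, or Laplace transform, of the semigroup. I would then use a Davies--Combes--Thomas perturbation argument: conjugate by $e^{\epsilon\psi}$ with $\psi$ a bounded-gradient Lipschitz function. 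The conjugated operator is a small perturbation of $-\triangle$ in $L^2$ operator norm, so for $\epsilon$ small it is still invertible at $z$. This yields exponential off-diagonal decay of the resolvent kernel in an $L^2$-averaged sense, e.g. $\|1_{B(x,1)}(-\triangle-z)^{-1}1_{B(y,1)}\|_{2\to2}\le Ce^{-\epsilon d(x,y)}$. Combining this with the local Gaussian bounds upgrades it, via local $L^1\to L^2$ smoothing, to pointwise or $L^1$ bounds on unit balls.

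Third, I would apply a Schur test to show that this kernel is bounded on $L^1$ and on $L^\infty$, and then interpolate to all $p$. This is where uniformly sub-exponential volume growth enters. Covering $M$ by unit balls, I need
\[
\sup_x\int_M e^{-\epsilon d(x,y)}\,V_y(1)^{-1}\,dy<\infty .
\]
Uniform sub-exponential growth $V_x(r)\le c(\epsilon')e^{\epsilon' r}V_x(1)$, applied with $\epsilon'<\epsilon$ and summed over dyadic annuli, gives exactly this bound. Volume comparison is needed to compare $V_y(1)$ with $V_x(1)$ up to a factor $e^{Cd(x,y)}$; I expect to absorb that factor by choosing the constants in the right order.

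The main obstacle is this last step: making the exponential decay rate $\epsilon$ obtained from the perturbation argument, which depends on $\mathrm{dist}(z,\sigma_2)$, beat both the volume growth and the non-uniformity of $V_y(1)$. What I would try first is to weight the Schur test with $V_y(1)^{1/2}$. Uniformity of $c(\epsilon)$ in the base point, which is part of the hypothesis, is precisely what lets $\epsilon'$ be taken arbitrarily small relative to $\epsilon$.
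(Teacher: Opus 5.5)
The paper gives no proof of this lemma; it is quoted from Sturm \cite{St}. Sturm's argument is essentially the Hempel--Voigt scheme you outline: Gaussian bounds from $\mathrm{Ric}\ge -K$, Davies/Combes--Thomas exponentially weighted $L^2$ estimates, and sub-exponential volume growth to turn weighted $L^2$ bounds into $L^1$ bounds. You are right to read the hypothesis as uniform in the centre, $V_x(r)\le c(\epsilon)e^{\epsilon r}V_x(1)$ for all $x$. That is Sturm's hypothesis; the paper's introductory definition only uses the fixed point $o$. The architecture is sound, but the step you single out as the main obstacle is not resolved by what you propose.

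The Bishop--Gromov factor $e^{C d(x,y)}$ has $C=C(n,K)$ fixed. The Combes--Thomas rate $\epsilon$ must be small, and it degenerates as $z$ approaches the $L^2$ spectrum. So $e^{-\epsilon d}$ cannot absorb $e^{Cd}$, whatever order you choose the constants in. A Schur test weighted by $V_y(1)^{1/2}$ does not help either: it gives boundedness on a weighted $L^1$ space, not on $L^1$ or $L^\infty$.

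The missing observation is that the hypothesis itself controls the non-uniformity:
\[
V_x(1)\le V_y(1+d(x,y))\le c(\epsilon')e^{\epsilon'(1+d(x,y))}V_y(1).
\]
So unit-ball volumes vary only sub-exponentially, and Bishop--Gromov is needed only inside Gaussian factors, where any $e^{Cd}$ is harmless.

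You can also avoid comparing distinct centres altogether. Let $S=e^{\triangle}(-\triangle-z)^{-1}e^{\triangle}$ and let $f$ be supported in $B_y(1)$. Then
\[
\|Sf\|_{L^1}\le \|e^{-\epsilon d(\cdot,y)}\|_{L^2}\,\|e^{\epsilon d(\cdot,y)}Sf\|_{L^2}.
\]
The first factor satisfies $\|e^{-\epsilon d(\cdot,y)}\|_{L^2}^2\le \sum_k e^{-2\epsilon k}V_y(k+1)\le C V_y(1)$, by the hypothesis at $y$. The second satisfies $\|e^{\epsilon d(\cdot,y)}Sf\|_{L^2}\le C V_y(1)^{-1/2}\|f\|_{L^1}$. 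This follows from Davies' bound $\|e^{\epsilon\psi}e^{t\triangle}e^{-\epsilon\psi}\|_{2\to 2}\le e^{\epsilon^2 t}$, Combes--Thomas applied to truncations of $d(\cdot,y)$, and the Gaussian bound at time $1$. The two powers of $V_y(1)$ cancel. Partitioning $M$ into pieces of unit balls gives $L^1$ boundedness; duality then gives $L^\infty$, and Riesz--Thorin the intermediate $p$.

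Three smaller points.
\begin{itemize}
\item A Laplace transform or contour integral of the semigroup does not represent $(-\triangle-z)^{-1}$ for $z$ in a gap of the $L^2$ spectrum. Also, the resolvent does not map $L^1$ boundedly into $L^2_{\rm loc}$ when $n\ge 4$. What makes your smoothing step work is the identity
\[
(-\triangle-z)^{-1}=A+e^{z}e^{\triangle}A+e^{2z}S,\qquad A=\int_0^1 e^{zt}e^{t\triangle}\,dt,
\]
whose first two terms are bounded on every $L^p$.
\item In your easy inclusion, consistency of the $L^p$ and $L^{p'}$ resolvents is automatic only on the unbounded component of the $L^p$ resolvent set. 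Prove the hard inclusion first, so the $L^p$ spectrum lies in $\mathbb{R}$ and its complement is connected.
\item The conjugated operator differs from $-\triangle$ by a first-order term. That term is relatively form-bounded with bound $O(\epsilon)$; it is not small in operator norm.
\end{itemize}
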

N. Charalambous and Z. Lu \cite{CL} proved an important generalization of Weyl's criterion, which proves to be a powerful tool in expanding the set of manifolds for which the $L^2$ essential spectrum is the nonnegative real line.
\begin{lemma}\label{criteria} (\cite{CL})
A number $\lambda \in \mathbb{R^+}$ lies in $\sigma_{\mbox{ess}}(M)$ if for any $\epsilon>0$ and any compact subset $K$ of $M$, there exists a nonzero function $u$ in the domain of $\triangle$ satisfying
\begin{equation}
\|u\|_{L^{\infty}}\|\triangle u + \lambda u\|_{L^1} \leq \epsilon \|u\|^2_{L^2}
\end{equation}
and that the support of $u$ is outside $K$.
\end{lemma}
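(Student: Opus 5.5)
The plan is to argue by contradiction, using the classical Weyl criterion together with the spectral theorem for the self-adjoint operator $-\triangle$. Suppose $\lambda\notin\sigma_{ess}(M)$. Then there is $\delta>0$ such that $\sigma(M)\cap(\lambda-\delta,\lambda+\delta)$ consists of finitely many eigenvalues of finite multiplicity. Let $P$ be the spectral projection of $-\triangle$ onto $(\lambda-\delta,\lambda+\delta)$. This is a finite-rank projection whose range $E$ is spanned by finitely many $L^2$ eigenfunctions $\phi_1,\dots,\phi_N$.

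Next I take an exhaustion $K_1\subset K_2\subset\cdots$ of $M$ by compact sets and $\epsilon_k\to0$. The hypothesis then gives functions $u_k$ in the domain of $\triangle$, supported in $M\setminus K_k$, normalized so that $\|u_k\|_{L^2}=1$, with
$$\|u_k\|_{L^\infty}\,\|\triangle u_k+\lambda u_k\|_{L^1}\le\epsilon_k .$$
Since the supports escape to infinity and each $\phi_j\in L^2$, we get $|\langle u_k,\phi_j\rangle|\le\|\phi_j\|_{L^2(M\setminus K_k)}\to0$. Hence $\|Pu_k\|_{L^2}\to0$ and $\|(1-P)u_k\|_{L^2}\to1$. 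The goal is to contradict this by showing that $\|(1-P)u_k\|_{L^2}$ is in fact small.

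The key step is to convert the $L^\infty\times L^1$ smallness into a genuine spectral statement. Hölder gives directly
$$|\langle(\triangle+\lambda)u_k,u_k\rangle|\le\|u_k\|_{L^\infty}\|(\triangle+\lambda)u_k\|_{L^1}\le\epsilon_k .$$
However, this only controls a signed quadratic form: the spectral measure of $u_k$ could split between $\{s<\lambda-\delta\}$ and $\{s>\lambda+\delta\}$ with cancelling contributions. To remove the sign ambiguity, I would pair $(\triangle+\lambda)u_k$ with $w_k=J u_k$ rather than with $u_k$, where $J=\chi_{(\lambda+\delta,\infty)}(-\triangle)-\chi_{[0,\lambda-\delta)}(-\triangle)$. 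Then
$$\langle(-\triangle-\lambda)u_k,w_k\rangle=\int_{|s-\lambda|\ge\delta}|s-\lambda|\,d\mu_{u_k}(s)\ge\delta\|(1-P)u_k\|^2 .$$
It would therefore suffice to bound $\|w_k\|_{L^\infty}$ by $C\|u_k\|_{L^\infty}$, uniformly in $k$, modulo a term that vanishes as $k\to\infty$. Since $J$ is not a local operator, I would first smooth it. Replace the step functions by a smooth odd-type function $g(s)$ with $g(s)(s-\lambda)\ge c\,|s-\lambda|$ for $|s-\lambda|\ge\delta$, and realize $g(-\triangle)$ through the heat or wave semigroup. For example, take $g$ to be a combination of $e^{-t s}$ terms. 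These semigroups are contractions on $L^\infty$, being sub-Markovian, so that
$$\|g(-\triangle)u_k\|_{L^\infty}\le C_g\|u_k\|_{L^\infty}.$$
Combining, $\delta\|(1-P)u_k\|^2\lesssim C_g\epsilon_k+o(1)$, where the $o(1)$ accounts for the part of the spectrum in $(\lambda-\delta,\lambda+\delta)$, which is controlled by $\|Pu_k\|\to0$. This contradicts $\|(1-P)u_k\|\to1$.

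The main obstacle is exactly this last step: finding a function $g$ that is simultaneously (i) sign-correcting, meaning $(s-\lambda)g(s)\gtrsim|s-\lambda|$ away from $\lambda$, and (ii) expressible as $g(-\triangle)=\int e^{t\triangle}\,d\nu(t)$ for a finite signed measure $\nu$, so that it is bounded on $L^\infty$ with no curvature assumptions on $M$. Heat-semigroup combinations are completely monotone-type functions, and they cannot change sign at $\lambda$ on their own. I would first try $g(s)=1-2a\,e^{-ts}$ with $t,a$ tuned so that $g$ changes sign near $\lambda$, and accept that the resulting inequality holds only up to an error localized in $(\lambda-\delta,\lambda+\delta)$, which is absorbed by $P$. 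If that tuning fails, I would instead enlarge the window defining $P$ and use a resolvent $(-\triangle+\mu)^{-1}$ with $\mu>0$, which is positivity preserving and hence $L^\infty$-bounded with norm $1/\mu$, to build $g$.
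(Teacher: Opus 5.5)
The paper gives no proof of this lemma; it only quotes Charalambous--Lu. Your strategy is sound and is, in substance, the resolvent mechanism behind their criterion. You argue by contradiction, use the finite-rank projection $P=\chi_{(\lambda-\delta,\lambda+\delta)}(-\triangle)$, get $\|Pu_k\|_{L^2}\to0$ from the escaping supports, and then pair $(\triangle+\lambda)u_k$ against an $L^\infty$-bounded function of $-\triangle$ that makes the spectral integrand nonnegative.

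The gap is that you leave the decisive step open, and the obstacle you describe is not real. Heat-semigroup combinations do change sign once you allow the identity, i.e.\ an atom of $\nu$ at $t=0$. Take $\mu>0$ and
\[
g(s)=\frac{s-\lambda}{s+\mu},\qquad g(-\triangle)=I-(\lambda+\mu)(\mu-\triangle)^{-1}=I-(\lambda+\mu)\int_0^\infty e^{-\mu t}e^{t\triangle}\,dt .
\]
The heat semigroup is sub-Markovian on any complete manifold, so $\|g(-\triangle)u\|_{L^\infty}\le(2+\lambda/\mu)\|u\|_{L^\infty}$, with no curvature hypothesis. Moreover $(s-\lambda)g(s)=(s-\lambda)^2/(s+\mu)$ is nonnegative on all of $[0,\infty)$, and it is at least $\delta^2/(\lambda+\delta+\mu)$ for $|s-\lambda|\ge\delta$. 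So you need neither the operator $J$ nor any absorption of a window error:
\[
\frac{\delta^2}{\lambda+\delta+\mu}\,\|(1-P)u_k\|_{L^2}^2\le\langle(-\triangle-\lambda)u_k,\,g(-\triangle)u_k\rangle\le\Big(2+\frac{\lambda}{\mu}\Big)\|u_k\|_{L^\infty}\|(\triangle+\lambda)u_k\|_{L^1}\le\Big(2+\frac{\lambda}{\mu}\Big)\epsilon_k .
\]
This contradicts $\|(1-P)u_k\|_{L^2}\to1$.

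Your first attempt also works if tuned exactly. Setting $2a=e^{t\lambda}$ gives $g(s)=1-e^{-t(s-\lambda)}$, which vanishes exactly at $s=\lambda$. It satisfies $(s-\lambda)g(s)\ge0$ everywhere and $(s-\lambda)g(s)\ge\delta(1-e^{-t\delta})$ for $|s-\lambda|\ge\delta$, and $\|g(-\triangle)\|_{L^\infty\to L^\infty}\le1+e^{t\lambda}$. Even a sign change elsewhere inside $(\lambda-\delta,\lambda+\delta)$ would do: the negative part of the integrand then lives in the window and is bounded by a constant times $\|Pu_k\|_{L^2}^2\to0$.

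Also, only a positive lower bound for $(s-\lambda)g(s)$ off the window is needed, not the linear bound $\gtrsim|s-\lambda|$ you asked for. With either choice of $g$ written in, your proposal is a complete proof.
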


Most of the approximate eigenfunctions we can construct explicitly must be related to the distance function $r(x)=d(x, o)$ from a reference point $o\in M$. By mollifying the  function $r(x)$ as in \cite{LZ, CL}, we have a smooth function $\widetilde{r}(x)$ approximating $r(x)$ and satisfying the following pointwise and integral properties.
\begin{lemma} \label{r-smooth}
Let  $\eta: \mathbb{R}^+ \rightarrow \mathbb{R}^+$ be a positive continuous decreasing function satisfying
$\underset{r\rightarrow \infty}{\lim} \eta (r) =0$, then there exists a smooth function $\widetilde{r}(x)$ on $M$ such that\\
 (a) for  any $x \in M$ with $r(x) >2$, we have $|\widetilde{r}(x) - r(x)| \leq \eta (r(x))$ and $|\nabla \widetilde{r}(x)| \leq 3$;\\
 (b) $\|\nabla \widetilde{r}-\nabla r\|_{L^1(M\setminus B(R))} \leq \eta(R)$ for any $R>0$;\\
(c) for any $b>a>1$, we have
\begin{equation*}
\int_{B(b)\setminus B(a)} |\triangle \widetilde{r}| dv \leq \int_{B(b)\setminus B(a)} |\triangle r|dv +2\eta(a).
\end{equation*}
\end{lemma}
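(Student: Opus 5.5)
We would build $\widetilde r$ by mollifying $r$ on a locally finite cover by small coordinate balls, with precision that shrinks toward infinity, following \cite{LZ, CL}. Since $M$ is complete, choose an exhaustion by annuli $U_k = B(k+1)\setminus \overline{B(k-1)}$, $k\ge 1$ (plus a neighborhood of $\overline{B(1)}$), and a smooth partition of unity $\{\psi_k\}$ subordinate to $\{U_k\}$ with $\sum_k\psi_k=1$ and $|\nabla\psi_k|\le C_0$ for a uniform constant $C_0$; each point lies in at most two $U_k$. On each $\overline{U_k}$ cover by finitely many coordinate charts, and with a further subordinate partition of unity convolve $r$ in coordinates with a standard mollifier of radius $\delta_k$. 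This gives smooth $r_k$ on $U_k$ with $r_k\to r$ uniformly on $\overline{U_k}$, $\nabla r_k\to\nabla r$ in $L^1(U_k)$ (since $r$ is Lipschitz, $\nabla r\in L^\infty_{loc}$ and mollification converges in $L^1_{loc}$), and $|\nabla r_k|\le 1+o(1)$ as $\delta_k\to0$. Set $\widetilde r=\sum_k\psi_k r_k$.

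Given the decreasing $\eta$, pick $\delta_k$ so small that on $U_k$ we have $\sup|r_k-r|\le \epsilon_k$, $\|\nabla r_k-\nabla r\|_{L^1(U_k)}\le\epsilon_k$, and $|\nabla r_k|\le 2$, where $\epsilon_k\le 2^{-k}\eta(k+1)/(10(1+C_0))$ and $\sum_{j\ge k}\epsilon_j$ is small relative to $\eta(k+1)$. Then (a) follows because $\widetilde r-r=\sum\psi_k(r_k-r)$ and $\eta$ is decreasing, with $r(x)\le k+1$ on $U_k$. The gradient bound follows from $\nabla\widetilde r=\sum\psi_k\nabla r_k+\sum\nabla\psi_k(r_k-r)$, using $\sum\nabla\psi_k=0$; this gives $|\nabla\widetilde r|\le 2+2C_0\epsilon_k\le3$. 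The same identity gives (b): $\|\nabla\widetilde r-\nabla r\|_{L^1(M\setminus B(R))}\le\sum_{k\ge R-1}(\epsilon_k+C_0\epsilon_k\,\mathrm{vol}(U_k))$. We therefore also require $\epsilon_k\,\mathrm{vol}(U_k)\le 2^{-k}\eta(k+1)/(10(1+C_0))$, which is possible because each $\mathrm{vol}(U_k)$ is finite.

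The main obstacle is (c), because $\triangle r$ is only a measure. Its singular part on the cut locus is nonpositive, $\triangle r$ is locally integrable away from $o$ \cite{CL}, and we read $\int|\triangle r|$ as the total variation. We write
\[
\triangle\widetilde r=\sum\psi_k\triangle r_k+2\sum\langle\nabla\psi_k,\nabla r_k-\nabla r\rangle+\sum\triangle\psi_k\,(r_k-r),
\]
again using $\sum\nabla\psi_k=0$ and $\sum\triangle\psi_k=0$. The last two sums are controlled by choosing $\epsilon_k\,\|\triangle\psi_k\|_\infty\,\mathrm{vol}(U_k)$ small as well. For the first sum, mollification does not increase total variation up to an error that vanishes with $\delta_k$: in coordinates $\triangle$ has smooth coefficients, so $\triangle(r*\phi_\delta)=(\triangle r)*\phi_\delta+\text{commutator}$. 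The commutator involves only first derivatives of $r$ against derivatives of the coefficients, and tends to $0$ in $L^1$. This gives $\int_{B(b)\setminus B(a)}\psi_k|\triangle r_k|\le\int_{B(b+\delta_k)\setminus B(a-\delta_k)}\psi_k^{\delta}\,d|\triangle r|+\epsilon_k$. The boundary-layer enlargement is harmless when $a,b$ avoid atoms of $|\triangle r|$ on spheres; otherwise we handle it by letting $\delta_k\to0$ and absorbing the discrepancy into $\eta(a)$. Summing over $k\ge a-1$ and using $\sum\psi_k=1$ then gives (c) with error $2\eta(a)$. I expect the careful bookkeeping of this measure-valued commutator and boundary-layer step to be the delicate point.
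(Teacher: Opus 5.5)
Your construction for (a) and (b) is the paper's construction: coordinate mollifications glued by a partition of unity, using $\sum_k\nabla\psi_k=0$ and $\sum_k\triangle\psi_k=0$, with the precisions tuned to $\eta$, to the derivatives of $\psi_k$ and to $\mathrm{vol}(U_k)$. That part is fine. Reading $\triangle r$ as a measure with nonpositive singular part on $\mathrm{cut}(o)$, and controlling the variable-coefficient commutator by Friedrichs' lemma, are also correct.

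\textbf{The gap is the boundary layer in (c).} $\widetilde r$, and with it every $\delta_k$, is fixed before $a$ and $b$ are chosen, and (c) is claimed for all $b>a>1$ at once. So ``letting $\delta_k\to 0$'' is not available.

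If the pushforward $r_*|\triangle r|$ has no atoms, you can repair the step. The function $t\mapsto|\triangle r|(\{r<t\}\cap U_k)$ is uniformly continuous on $[k-1,k+1]$, so you can choose $\delta_k$ such that every shell of width $C\delta_k$ in $U_k$ has mass at most $\epsilon_k$. It is not enough that $a,b$ themselves avoid atoms, since a fixed-width shell next to $a$ may still contain one.

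If instead $|\triangle r|(\{r=t_0\})=m>0$ for some $t_0>1$, then no smooth $\widetilde r$ satisfies both (b) and (c) once $\eta$ is small. Put $\beta(w)=|\triangle r|(\{0<|r-t_0|\le w\})$, which tends to $0$ as $w\to 0$, and test with $\varphi=\max\{0,1-|r-t_0|/w\}$. By (b),
\[
-\int_{\{|r-t_0|<w\}}|\triangle\widetilde r|\,dv\le\int_M\varphi\,\triangle\widetilde r\,dv=-\int_M\langle\nabla\varphi,\nabla\widetilde r\rangle\,dv\le\int_M\varphi\,d(\triangle r)+\frac{\eta(t_0-w)}{w}\le -m+\beta(w)+\frac{\eta(t_0-w)}{w}.
\]
Now apply (c) to $\{t_0-w\le r<b\}$ with $b\uparrow t_0$, and to $\{a\le r<t_0+w\}$ with $a\downarrow t_0$. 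These annuli never contain the $dv$-null sphere $\{r=t_0\}$, so the atom never enters the right-hand side. This gives $\int_{\{|r-t_0|<w\}}|\triangle\widetilde r|\,dv\le\beta(w)+4\eta(t_0-w)$, which contradicts the display for small $w$ and then small $\eta$.

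Such atoms do occur. In round $\mathbb{RP}^n$ the cut locus of $o$ is the whole sphere $\{r=\pi/2\}$, and $\triangle r$ has singular part $-2\mathcal{H}^{n-1}$ there. Multiply the metric by a conformal factor that is $\ge 1$, equals $1$ outside a small ball around a point $q$ of that sphere, and blows up at $q$. This gives a complete non-compact manifold, embeddable in some $\mathbb{R}^m$ by Nash, in which $r$ and the metric are unchanged near the rest of that sphere.

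So your ``otherwise'' clause cannot be filled. What your argument does prove for all $b>a>1$ is (c) with the annulus on the right-hand side enlarged by the mollification scale, e.g.\ to $B(b+1)\setminus B(a-1)$. Statement (c) as written needs the extra hypothesis that $|\triangle r|$ charges no geodesic sphere.

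The paper never meets this boundary layer. It asserts that $\|\triangle r_{i,\epsilon_i}-\triangle r\|_{L^1(U_i)}$ can be made small, and then (c) follows for every annulus by the triangle inequality. That assertion presupposes $\triangle r\in L^1_{loc}$ with no singular part, and that coordinate mollification commutes with $\triangle$. The first already fails along the cut line of a round cylinder $S^1\times\mathbb{R}\subset\mathbb{R}^3$. So the paper's route does not supply your missing step either.
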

\begin{proof}
 Taking $x \in M$, we consider a neighborhood $U_x \subset B_x(1)$ such that\\
(i) there exists local coordinates $x_i$ defined in an open set containing $\overline{U_x}$ such that $x=(0,\ldots, 0)$;\\
(ii) Choose a locally finite cover $\{U_i\}_{i \in \mathbb{N}}$ from $\{U_x\}_{x \in M}$, and let
$\{\psi_i\}$ be the associated partition of the unity.

Let $\xi(x) \geq 0$ be a  smooth function whose support is within the unit ball of $\mathbb{R}^n$ and satisfying $\int_{\mathbb{R}^n}\xi dx =1$.
Fix $i \in \mathbb{N}$ and denote by $\mathbf{x}_i=(x_i^1,\ldots, x_i^n)$ the local coordinate of $U_i$.
Define
\begin{equation}
r_{i, \epsilon_i}(\mathbf{x}_i)=\frac{1}{\epsilon_i^n} \int_{\mathbb{R}^n}\xi\left(\frac{\mathbf{y}_i}{\epsilon_i}\right)r(\mathbf{x}_i +\mathbf{y}_i)d\mathbf{y}_i
\end{equation}
with $\epsilon_i>0$, and $\Lambda_i = \sup_{U_i}\{|\nabla \psi_i|+|\triangle \psi_i|\}+1>1$. Choose $\epsilon_i<1$ small enough such that, for all $x \in U_i$,
\begin{equation*}
|r_{i, \epsilon_i}(x)- r(x)|\leq \frac{\eta(r(x))}{2^{i} \Lambda_i},\ \ \ \
\|r_{i, \epsilon_i}(x)- r(x)\|_{L^1(U_i)}\leq \frac{\eta(r(x))}{2^{i+1} \Lambda_i}.
\end{equation*}
Since
\begin{equation*}
\nabla r_{i, \epsilon_i}(\mathbf{x}_i)=\frac{1}{\epsilon_i^n} \int_{\mathbb{R}^n}\xi\left(\frac{\mathbf{y}_i}{\epsilon_i}\right)\nabla r(\mathbf{x}_i +\mathbf{y}_i)d\mathbf{y}_i,
\end{equation*}
and
\begin{equation*}
\triangle r_{i, \epsilon_i}(\mathbf{x}_i)=\frac{1}{\epsilon_i^n} \int_{\mathbb{R}^n}\xi\left(\frac{\mathbf{y}_i}{\epsilon_i}\right)\triangle r(\mathbf{x}_i +\mathbf{y}_i)d\mathbf{y}_i
\end{equation*}
in the sense of distribution, and $\nabla r$, $\triangle r$ are locally integrable, we can choose $\epsilon_i<1$ small enough such that, for all $x \in U_i$,
\begin{equation}
\ \ \ \  |\nabla r_{i, \epsilon_i}(x)| <2,\ \ \ \   \|\nabla r_{i, \epsilon_i}(x)- \nabla r(x)\|_{L^1(U_i)}\leq \frac{\eta(r(x))}{2^{i+1} \Lambda_i} \label{gr}
\end{equation}
and
\begin{equation}
\|\triangle r_{i, \epsilon_i}(x)- \triangle r(x)\|_{L^1(U_i)}\leq \frac{\eta(r(x))}{2^{i+1} \Lambda_i}. \label{Lap}
\end{equation}
Define, for $x \in M$,
\begin{equation*}
\tilde{r}(x)=\underset{i}{\sum}\psi_i(x)  r_{i,\epsilon_i}(x).
\end{equation*}
Then
\begin{align*}
|\widetilde{r}(x) - r(x)| =&|\underset{i}{\sum}\psi_i(x)  (r_{i,\epsilon_i}(x)-r(x))|\\
\leq& \underset{i}{\sum}|\psi_i(x)|  |r_{i,\epsilon_i}(x)-r(x)|
\leq \eta (r(x)).
\end{align*}
Without loss of generality, we assume that $\eta (r)\leq 1$. Noticing that $\underset{i}{\sum}\nabla \psi_i(x)  r(x)=\nabla (\underset{i}{\sum}\psi_i(x))  r(x)=0$ almost everywhere on $M$, we have
\begin{align*}
|\nabla \widetilde{r}|=\big{|}\underset{i}{\sum}\big{(}\psi_i  \nabla r_{i,\epsilon_i}+\nabla \psi_i  (r_{i,\epsilon_i}-r)\big{)}\big{|}
\leq& \underset{i}{\sum}\psi_i  |\nabla r_{i,\epsilon_i}| + \underset{i}{\sum}|\nabla \psi_i||r_{i,\epsilon_i}-r|\\
\leq & 2\underset{i}{\sum}\psi_i + \underset{i}{\sum} \Lambda_i \frac{\eta(r(x))}{2^{i}\Lambda_i}
\leq 3.
\end{align*}
Thus (a) follows. Similarly, we have
\begin{align*}
\nabla \widetilde{r}-\nabla r=\underset{i}{\sum}\big{(}\psi_i ( \nabla r_{i,\epsilon_i}-\nabla r)+\nabla \psi_i  (r_{i,\epsilon_i}-r)\big{)}.
\end{align*}
Hence,
\begin{align*}
\nonumber &\int_{M\setminus B(R)}|\nabla \tilde{r}-\nabla r| dv \\
\nonumber\leq& \underset{i}{\sum}\int_{M\setminus B(R)}\psi_i |\nabla  r_{i,\epsilon_i}-\nabla r|dv+ \underset{i}{\sum} \int_{M\setminus B(R)}|\nabla \psi_i||r_{i,\epsilon_i} -r| dv\\
\nonumber\leq& \underset{i}{\sum}\int_{(M\setminus B(R))\cap U_i} |\nabla  r_{i,\epsilon_i}-\nabla r| dv
+\underset{i}{\sum}  \Lambda_i \int_{(M\setminus B(R))\cap U_i}|r_{i,\epsilon_i}-r|dv\\
\nonumber\leq& \underset{i}{\sum}  \frac{\eta(R)}{2^{i+1}}+\underset{i}{\sum} \Lambda_i \frac{\eta(R)}{2^{i+1}\Lambda_i}\\
=&\eta(R).
\end{align*}

To prove (c), observing that
\begin{equation*}
\sum_i\triangle \psi_i=\triangle(\sum_i \psi_i)=\triangle 1=0\ \  \mbox{and}\ \
 \sum_i\langle \nabla \psi_i, \nabla r \rangle=\langle \nabla (\sum_i\psi_i), \nabla r \rangle =0,
\end{equation*}
we have
\begin{align}
\nonumber \triangle \tilde{r}-\triangle r=&\underset{i}{\sum}\psi_i (\triangle  r_{i,\epsilon_i}-\triangle r)+\underset{i}{\sum}(\triangle \psi_i)(r_{i,\epsilon_i} -r)\\
 &+2 \underset{i}{\sum}\langle \nabla \psi_i, \nabla r_{i,\epsilon_i}-\nabla r \rangle. \label{Le02}
\end{align}
By (\ref{Le02}),  (\ref{gr}) and (\ref{Lap}),  it follows that
\begin{align*}
\nonumber &\int_{B(b)\setminus B(a)}|\triangle \tilde{r}-\triangle r| dv \\
\nonumber\leq& \underset{i}{\sum}\int_{B(b)\setminus B(a)}\psi_i |\triangle  r_{i,\epsilon_i}-\triangle r|dv+ \underset{i}{\sum} \int_{B(b)\setminus B(a)}|\triangle \psi_i||r_{i,\epsilon_i} -r| dv\\
\nonumber&+2 \underset{i}{\sum}\int_{B(b)\setminus B(a)}|\nabla \psi_i||\nabla r_{i,\epsilon_i}-\nabla r|dv\\
\nonumber\leq& \underset{i}{\sum}\int_{(B(b)\setminus B(a))\cap U_i} |\triangle  r_{i,\epsilon_i}-\triangle r| dv
+\underset{i}{\sum} \Lambda_i \int_{(B(b)\setminus B(a))\cap U_i} |r_{i,\epsilon_i} -r| dv\\
\nonumber&+2\underset{i}{\sum}\Lambda_i \int_{(B(b)\setminus B(a))\cap U_i}|\nabla r_{i,\epsilon_i}-\nabla r|dv\\
\nonumber\leq& 2\eta(a).
\end{align*}
Therefore, we conclude
\begin{align*}
\int_{B(b)\setminus B(a)}|\triangle \tilde{r}| dv \leq& \int_{B(b)\setminus B(a)}|\triangle r| dv +\int_{B(b)\setminus B(a)}|\triangle \tilde{r}-\triangle r| dv\\
\leq& \int_{B(b)\setminus B(a)}|\triangle r| dv + 2\eta(a),
\end{align*}
which gives (c) and the Lemma is proved.
\end{proof}

\section{Essential spectrum of submanifolds with integral mean curvature}
Let $M^n$ be a complete  submanifold in $\mathbb{R}^{m}$, and let $\rho(x)$ denote the extrinsic distance of $x $ relative to a point $o\in M$. We have the following spectral theorem under extrinsic assumptions.

\begin{theorem} \label{thm-1}
Let $M^n$ be a complete non-compact properly immersed submanifold in $\mathbb{R}^{m}$ with $\|H\|_{L^n} < \infty$. Assume that $|\bigtriangledown \rho| \rightarrow \gamma$ as $\rho \rightarrow \infty$ for some $\gamma >0$ and the extrinsic volume of $M$ grows uniformly sub-exponentially. Then the $L^2$ essential spectrum of $M$ is $[0, +\infty)$.
\end{theorem}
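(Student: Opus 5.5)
We apply the Charalambous--Lu criterion (Lemma \ref{criteria}). The test functions are built from the extrinsic distance $\rho$, which is smooth away from $o$ and satisfies the classical identity
\begin{equation*}
\triangle \rho^2 = 2n + 2n\langle H, x\rangle, \qquad \text{hence}\qquad \triangle\rho=\frac{n-|\nabla\rho|^2}{\rho}+\frac{n\langle H,x\rangle}{\rho}.
\end{equation*}
Consequently $|\triangle \rho|\le \frac{n}{\rho}+n|H|$. Since $|\nabla\rho|\to\gamma>0$, $\rho$ behaves like a (rescaled) distance function at infinity. Properness ensures that the sets $D(r)$ are compact, so functions supported in annuli $D(R_2)\setminus D(R_1)$ with $R_1$ large have support outside any prescribed compact $K$.

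Fix $\lambda>0$; the value $0$ will follow because $\sigma_{ess}$ is closed. We set $\mu=\sqrt{\lambda}/\gamma$ and $u=\chi(\rho)e^{i\mu\rho}$, working with real and imaginary parts if necessary. Here $\chi$ is a cutoff equal to $1$ on $[R_1+1,R_2-1]$, supported in $[R_1,R_2]$, with $|\chi'|,|\chi''|\le C$. Then
\begin{equation*}
\triangle u+\lambda u = e^{i\mu\rho}\Big[(\chi''+2i\mu\chi'-\mu^2\chi)|\nabla\rho|^2+(\chi'+i\mu\chi)\triangle\rho+\lambda\chi\Big].
\end{equation*}
The terms carrying $\chi\mu^2(\gamma^2-|\nabla\rho|^2)$ are bounded pointwise by $o(1)$ as $R_1\to\infty$. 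The terms involving $\triangle \rho$ are bounded by $C(\rho^{-1}+|H|)$, and the derivative terms of $\chi$ live only on $D(R_1+1)\setminus D(R_1)$ and $D(R_2)\setminus D(R_2-1)$. Since $\|u\|_\infty\le1$ and $\|u\|_2^2\ge V_M(R_2-1)-V_M(R_1+1)$, we must show
\begin{align*}
o(1)\,V_M(R_2)&+\frac{C}{R_1}V_M(R_2)+C\int_{D(R_2)\setminus D(R_1)}|H|\,dv\\
&+C\big[V_M(R_1+1)-V_M(R_1)+V_M(R_2)-V_M(R_2-1)\big]
\le \epsilon\,\big[V_M(R_2-1)-V_M(R_1+1)\big].
\end{align*}

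For the $H$ term we use H\"older: $\int_{D(R_2)\setminus D(R_1)}|H|\le \|H\|_{L^n(M\setminus D(R_1))}V_M(R_2)^{1-1/n}$. The first factor tends to $0$. Moreover $V_M(R_2)^{1-1/n}\le V_M(R_2)$ once the volume is at least $1$; the volume is infinite, which can be deduced, e.g., from the monotonicity formula for submanifolds with $H\in L^n$, or directly from sub-exponential growth arguments. So all "bulk" terms are $o(1)V_M(R_2)$.

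The main obstacle is the boundary-layer terms and the comparison of $V_M(R_2)$ with $V_M(R_2-1)-V_M(R_1+1)$. Here uniform sub-exponential growth is used, as in Wang and Lu--Zhou. We argue by contradiction. If for some $\delta>0$ every admissible choice of $R_1<R_2$ gave $V_M(R_2)-V_M(R_2-1)\ge\delta V_M(R_2)$, together with similar inequalities, iterating would give $V_M(R_1+k)\ge(1-\delta)^{-k}V_M(R_1)$. That is exponential growth with a fixed rate, contradicting $V_M(r)\le c(\epsilon)e^{\epsilon r}$ for small $\epsilon$. Concretely, we first fix $R_1$ large, so that the pointwise errors and the tail $\|H\|_{L^n(M\setminus D(R_1))}$ are tiny. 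We then choose $R_2\gg R_1$ such that $V_M(R_2)\le(1+\epsilon)V_M(R_2-1)$ and $V_M(R_1+1)\le\epsilon V_M(R_2)$. Such an $R_2$ exists since $V_M\to\infty$ and the growth is sub-exponential. The inner boundary layer is then bounded by $V_M(R_1+1)\le\epsilon V_M(R_2)$. With these choices every term is at most $C\epsilon\,\|u\|_2^2$, and Lemma \ref{criteria} yields $\lambda\in\sigma_{ess}(M)$ for all $\lambda>0$, hence $\sigma_{ess}(M)=[0,+\infty)$.
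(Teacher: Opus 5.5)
Your proof is correct and is essentially the paper's argument. Both use the Charalambous--Lu criterion on $\chi(\rho)e^{i\mu\rho}$, the bound $|\triangle\rho|\le n/\rho+n|H|$, H\"older on the $H$-term, infinite volume, and uniform sub-exponential growth to choose a good outer radius. The only difference is the cutoff scale: you use unit-width cutoff layers and pick $R_2$ with $V_M(R_2)\le(1+\epsilon)V_M(R_2-1)$, whereas the paper rescales the cutoff as $\phi(\rho/R)$, so its derivative terms are $O(1/R)$, and picks $b$ with $V_M(b+R)\le 2V_M(b)$.

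One small correction concerns infinite volume. It cannot come from sub-exponential growth, which is only an upper bound, and the plain monotonicity formula is borderline at $p=n$. Cite \cite{CMV} as the paper does, or use the Michael--Simon Sobolev inequality on far-out regions where the $L^n$ tail of $H$ is small.
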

\begin{proof}
Using the Gauss formula, a standard computation gives (see e.g. \cite{JK})
 \begin{equation*}
 \mbox{Hess}^M(\rho)(X,Y)=\mbox{Hess}^{\mathbb{R}^m} (\rho)(X,Y) + \langle A(X,Y), \overline{\nabla}\rho \rangle,
 \end{equation*}
which implies
\begin{equation*}
 \triangle \rho =\mbox{tr}_M(\mbox{Hess}^{\mathbb{R}^m}  \rho) + n\langle H, \overline{\nabla}\rho \rangle.
 \end{equation*}
Considering the fact that $\mbox{Hess}^{\mathbb{R}^m}  \rho = \frac{1}{\rho}(g_{\mathbb{R}^m}- d\rho\otimes d\rho)$ and  $|\overline{\nabla}  \rho|=1$, we have
\begin{equation}
 |\triangle \rho| \leq \frac{n}{\rho} + n|H|. \label{lap-H}
\end{equation}

Let $a$, $b$, $R>1$ be positive real numbers satisfying $b>a+2R$ and $a>2R$. Define a cut-off function $\phi$ satisfying $\phi\leq 1$, $\phi\equiv 1$ on $[a/R, b/R]$,  $\mbox{supp} \phi \subset [a/R-1, b/R+1]$ and $|\phi'| +|\phi''|$ is bounded. Denote $D(r)=\{x \in M: 0\leq \rho(x)\leq r\}$ and $V_M(r)=\mbox{volume}$ of $D(r)$.

For any given $\lambda >0$, consider
\begin{equation*}
\psi_{ab}= \phi\big(\frac{\rho}{R}\big)e^{i\sqrt{\lambda}\rho} \label{cut-off}.
\end{equation*}
Then
\begin{equation*}
\nabla \psi_{ab}= \Big( \frac{1}{R}\phi'\big(\frac{\rho}{R}\big) +i\sqrt{\lambda}\phi\big(\frac{\rho}{R}\big) \Big) e^{i\sqrt{\lambda}\rho}\nabla \rho.
\end{equation*}
Thus
\begin{align*}
\triangle \psi_{ab}+ \lambda \gamma^2\psi_{ab}=& \Big[ \frac{1}{R^2}\phi''\big(\frac{\rho}{R}\big)|\nabla \rho|^2 +i\sqrt{\lambda}\frac{2}{R}\phi'\big(\frac{\rho}{R}\big)|\nabla \rho|^2
+ \Big( \frac{1}{R}\phi'\big(\frac{\rho}{R}\big) \\ &+i\sqrt{\lambda}\phi\big(\frac{\rho}{R}\big)\Big)\triangle \rho \Big] e^{i\sqrt{\lambda}\rho}
+ \lambda (\gamma^2 -|\nabla \rho|^2)\psi_{ab}.
\end{align*}
Substituting (\ref{lap-H}) into the above inequality yields
\begin{align}
\nonumber |\triangle \psi_{ab}+ \lambda \gamma^2 \psi_{ab}|\leq & \Big[C_1 \Big(\frac{1}{R} + |H| \Big) + \lambda  |\gamma^2 -|\nabla \rho|^2| \Big]\chi_{ab},
\end{align}
where $\chi_{ab}$ is the characteristic function of $D(b+R)\setminus D(a-R)$ and $C_1$ is a constant depending only on $\lambda$ and $\phi$.
Integrating and using the H\"{o}lder inequality gives
\begin{align}
\nonumber \int_M |\triangle \psi_{ab}+ \lambda \gamma^2 \psi_{ab}| dv  \leq & \frac{C_1}{R}(V_M(b+R)-  V_M(a-R))+ C_1\int_{D(b+R)\setminus D(a-R)}|H| dv \\
\nonumber &+ \lambda \int_{D(b+R)\setminus D(a-R)} |\gamma^2 -|\nabla \rho|^2| dv\\
\nonumber \leq& \frac{C_1}{R}V_M(b+R) + \lambda  \underset{D(b+R)\setminus D(a-R)}{\sup}|\gamma^2 -|\nabla \rho|^2|V_M(b+R) \\
 &+C_1 \big(\int_{D(b+R) \setminus D(a-R)}|H|^n dv \big)^{\frac{1}{n}} \big(V_M(b+R) \big)^{1-\frac{1}{n}}. \label{FV}
\end{align}
For any $\epsilon>0$, by the assumptions $\underset{\rho\rightarrow \infty}{\limsup}|\nabla \rho|^2 = \gamma^2$ and $\int_M |H|^n dv < \infty$, there exists $a_0$ such that for any $a\geq a_0$ we have
\begin{equation}
\lambda  \underset{D(b+R)\setminus D(a-R)}{\sup}|\gamma^2 -|\nabla \rho|^2| \leq \frac{\epsilon}{3} \label{S1}
\end{equation}
and
\begin{equation}
C_1 \big(\int_{D(b+R) \setminus D(a-R)}|H|^n dv \big)^{\frac{1}{n}}\leq \frac{\epsilon}{3}. \label{S2}
\end{equation}
Since $\int_M |H|^n dv < \infty$, it follows from \cite{CMV} that the volume of $M$ is infinite. Hence the extrinsic volume of $M$ is also infinite by the properly immersion.  Then there exists $R_0$ and $b_0$ such that
\begin{equation}
\frac{C_1}{R} \leq \frac{\epsilon}{3} \ \ \ \mbox{and}\ \ \  V_M(b+R) \geq 1 \label{S3}
\end{equation}
holds for any $R \geq R_0$ and $b\geq b_0$. Substituting (\ref{S1}), (\ref{S2}) and (\ref{S3}) into (\ref{FV}) yields
\begin{align}
 \int_M |\triangle \psi_{ab}+ \lambda \gamma^2 \psi_{ab}| dv  \leq \epsilon V_M(b+R). \label{up-volume1}
\end{align}
On the other hand, we have
\begin{align}
\nonumber \int_M | \psi_{ab}| dv  \geq  V_M(b) - V_M(a).
\end{align}
Choose $b_1\geq b_0$  such that $ V_M(b_1) \geq \frac{1}{2}V_M(a)$. Then it gives
 \begin{align}
 \int_M | \psi_{ab}| dv  \geq  \frac{1}{2}V_M(b) \label{down-volume}
\end{align}
for any $b\geq b_1$. Using a contradiction argument as in \cite{LZ}, the uniformly sub-exponentially volume growth of $M$ implies that there exists a sequence $y_k \rightarrow \infty$ such that $V_M(y_k +R) \leq 2V_M(y_k)$. If not, then for a fixed number $y$ and for all $k \in \mathbb{N}$ we have
\begin{equation*}
V_M(y +kR) > 2^k V_M(y).
\end{equation*}
But by the assumption that the extrinsic volume of $M$ grows uniformly sub-exponentially,
\begin{equation*}
 2^k V_M(y) < V_M(y +kR) \leq C(\epsilon_0)V_M(1)e^{\epsilon_0(y +kR)}
\end{equation*}
for any $\epsilon_0>0$ and $k$ large.
This gives a contradiction when $\epsilon_0 R <\log 2$.
Therefore,  there is a constant $b\geq b_1$ such that $V_M(b+R) \leq 2V_M(b)$.
Combining with (\ref{up-volume1}) and (\ref{down-volume}) yields
\begin{align*}
 \int_M |\triangle \psi_{ab}+ \lambda \gamma^2 \psi_{ab}| dv  \leq 4\epsilon \int_M | \psi_{ab}| dv .
\end{align*}
According to Lemma \ref{criteria}, $\lambda \gamma^2$ belongs to the essential spectrum for any $\lambda > 0$. Therefore, the $L^2$ essential spectrum of $M$ is $[0, +\infty)$.
 \end{proof}

\section{Essential spectrum of submanifolds with finite total curvature}
Let $r(x)=d(x, o)$ be the distance function from a reference point $o\in M$.
Around $o$ using exponential polar coordinates, we can write the volume element as
\begin{equation*}
dv= \omega (r, \theta) dr\wedge d\theta_{n-1},
\end{equation*}
where $d\theta_{n-1}$ is the standard volume element on the unit sphere $S^{n-1}(1)$. As $r$ increases $\omega (r, \theta)$ becomes undefined but we can just declare it to be zero for those $r$. It is known that
\begin{equation*}
\omega' = h(r, \theta)\omega,
\end{equation*}
where $h(r, \theta)$ is $n$ times the mean curvature of the geodesics sphere $\partial B_o(r)$, which is equal to $\triangle r$ at smooth points of $r(x)$. We know that $h(r, \theta)$ satisfies
\begin{equation}
h'+ \frac{h^2}{n-1} \leq - \mbox{Ric}(\partial r, \partial r). \label{Rac}
\end{equation}
Let us define  $f_+ = \max \{0, f\}$ for a function $f$ and declare that $f_+$ is 0 whenever it becomes undefined. Based on the relation (\ref{Rac}), Petersen and Wei proved the following result.
\begin{lemma}\label{lem4.1} ([16, Lemma 2.2])
Let $M^n$ be a complete Riemannian manifold of dimension $n$.  Then for $q> \frac{n}{2}$, we have
\begin{align*}
\int^r_0 \{h(t, \theta)- h_\kappa(t)\}_+^{2q}\omega(t,\theta)dt \leq C_1(n,q)
\int_0^r \{(n-1)\kappa- \mbox{Ric}(\partial r, \partial r)\}_+^q \omega(t,\theta)dt,
\end{align*}
where $C_1(n,q)$ is a constant depending only on $n$ and $q$, and
\begin{align*}
h_\kappa(t) =(n-1)\frac{sn'_\kappa(t)}{sn_\kappa(t)},
\end{align*}
where $sn_\kappa(t)$ is the unique solution to $\varphi''+\kappa\varphi=0$ with $\varphi(0)=0$  and $\varphi'(0)=1$.
\end{lemma}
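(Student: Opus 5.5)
Lemma \ref{lem4.1} is the Petersen--Wei estimate. My plan is a Riccati comparison along each radial geodesic, written in a form that avoids dividing by $\omega$ where it vanishes. Fix $\theta$ and, to keep notation light, write $h=h(t,\theta)$, $\omega=\omega(t,\theta)$ and $\rho=\{(n-1)\kappa-\mbox{Ric}(\partial r,\partial r)\}_+$. Set $\psi=(h-h_\kappa)_+$.

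\textbf{Step 1: a differential inequality for $\psi$.} Subtract the model identity $h_\kappa'+\frac{h_\kappa^2}{n-1}=-(n-1)\kappa$ from (\ref{Rac}). Where $\psi>0$ we have $h^2-h_\kappa^2=\psi(\psi+2h_\kappa)$, and this gives
\begin{equation*}
\psi'+\frac{\psi^2}{n-1}+\frac{2h_\kappa\psi}{n-1}\leq \rho .
\end{equation*}
At a cut point $\omega$ becomes zero and $h$ is undefined, so by the stated convention the integrands vanish beyond it and nothing more is needed there. Near $t=0$ both $h$ and $h_\kappa$ behave like $(n-1)/t$, so $\psi(0)=0$ in the limit.

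\textbf{Step 2: test with $\psi^{2q-1}\omega$ and integrate over $[0,r]$.} The key identity is
\begin{equation*}
\psi'\psi^{2q-1}\omega=\frac{1}{2q}(\psi^{2q}\omega)'-\frac{1}{2q}\psi^{2q}h\,\omega,
\end{equation*}
which uses $\omega'=h\omega$. Write $h=\psi+h_\kappa$ on the support of $\psi$ and integrate. The boundary term is $\frac{1}{2q}\psi^{2q}\omega(r)\geq 0$ and can be dropped; it vanishes at $0$ because $\omega\sim t^{n-1}$. This leaves
\begin{equation*}
\Big(\frac{1}{n-1}-\frac{1}{2q}\Big)\int_0^r\psi^{2q+1}\omega
+\Big(\frac{2}{n-1}-\frac{1}{2q}\Big)\int_0^r h_\kappa\psi^{2q}\omega
\leq \int_0^r \rho\,\psi^{2q-1}\omega .
\end{equation*}
The hypothesis $q>n/2$ gives $2q>n-1$, so both coefficients are positive. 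Since $h_\kappa>0$ for $\kappa\leq 0$ (and on the relevant range otherwise), the second integral can be discarded. This is exactly where $q>\frac n2$ enters, and it is also where one has to match the exponent in the lemma.

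\textbf{Step 3: close the estimate.} The step I expect to be the main obstacle is reconciling Step 2 with the lemma's exponents. Step 2 controls $\int\psi^{2q+1}\omega$, whereas the lemma asks for $\int\psi^{2q}\omega$ on the left and $\int\rho^q\omega$ on the right. The fix I would try first is to rerun Step 2 with the test function $\psi^{2q-2}\omega$ in place of $\psi^{2q-1}\omega$, i.e. with $q$ replaced by $q-\tfrac12$. The leading coefficient then becomes $\frac{1}{n-1}-\frac{1}{2q-1}$, which is positive under the standard requirement $2q-1>n-1$, i.e. $q>\frac n2$. The result is
\begin{equation*}
c(n,q)\int_0^r\psi^{2q}\omega\leq\int_0^r\rho\,\psi^{2q-2}\omega .
\end{equation*}
Applying H\"older with exponents $q$ and $\frac{q}{q-1}$ gives
\begin{equation*}
\int_0^r\rho\,\psi^{2q-2}\omega\leq\Big(\int_0^r\rho^q\omega\Big)^{1/q}\Big(\int_0^r\psi^{2q}\omega\Big)^{1-1/q}.
\end{equation*}
If $\int_0^r\psi^{2q}\omega=0$ there is nothing to prove. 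Otherwise divide by $\big(\int_0^r\psi^{2q}\omega\big)^{1-1/q}$ and raise to the power $q$, which yields the lemma with $C_1(n,q)=c(n,q)^{-q}$. Finally, I would justify integration across points where $\psi$ is only Lipschitz by a routine approximation, replacing $\psi$ with $\sqrt{\psi^2+\delta}-\sqrt\delta$ and letting $\delta\to0$.
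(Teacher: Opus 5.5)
Your final argument (test the Riccati inequality for $\psi=(h-h_\kappa)_+$ against $\psi^{2q-2}\omega$, integrate by parts using $\omega'=h\omega$, drop the nonnegative boundary and $h_\kappa$ terms, and close with H\"older) is exactly the Petersen--Wei proof that the paper cites without reproducing, and it gives $C_1(n,q)=\big(\frac{(n-1)(2q-1)}{2q-n}\big)^q$; the Step 2 detour with $\psi^{2q-1}\omega$ is unnecessary. Make your hedge explicit: dropping the $h_\kappa$ term needs $\kappa\le 0$ or $r\le \pi/(2\sqrt{\kappa})$, and some such restriction is genuinely required when $\kappa>0$ (for flat $\mathbb{R}^n$ the left side blows up as $r\to\pi/\sqrt{\kappa}$ while the right side stays bounded), though this is harmless here because the paper only uses $\kappa=0$.
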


Combining Lemma  \ref{Lin} with Lemma \ref{lem4.1}, we have
\begin{lemma} \label{Ricci-controp}
Let $M^n$ be a  complete submanifold  in the Euclidean space $\mathbb{R}^{m}$.  Then for $p> n$, we have
\begin{align}
\int^r_0 \{h(t, \theta)- h_0(t)\}_+^{p}\omega(t,\theta)dt \leq C_2(n,p)
\int_0^r |\Phi|^p \omega(t,\theta)dt,  \label{int-control}
\end{align}
where $C_2(n,p)$ is a constant depending only on $n$ and $p$, and
\begin{align*}
h_0(t) =\frac{n-1}{t}.
\end{align*}
\end{lemma}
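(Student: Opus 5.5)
The plan is to apply Lemma \ref{lem4.1} with $\kappa=0$ and $q=p/2$, and then bound its right-hand side using Lemma \ref{Lin}. The hypothesis $p>n$ gives exactly $q>\frac{n}{2}$, which is what Lemma \ref{lem4.1} requires. For $\kappa=0$ we have $sn_0(t)=t$, so
\begin{equation*}
h_\kappa(t)=(n-1)\frac{sn_0'(t)}{sn_0(t)}=\frac{n-1}{t}=h_0(t),
\end{equation*}
which is the comparison function appearing in (\ref{int-control}). Since $\{h-h_0\}_+^{2q}=\{h-h_0\}_+^{p}$, Lemma \ref{lem4.1} yields
\begin{equation*}
\int_0^r \{h(t,\theta)-h_0(t)\}_+^{p}\,\omega(t,\theta)\,dt \leq C_1\Big(n,\frac p2\Big)\int_0^r \{-\mbox{Ric}(\partial r,\partial r)\}_+^{p/2}\,\omega(t,\theta)\,dt .
\end{equation*}

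Next, the Euclidean space $\mathbb{R}^m$ is flat, so it trivially has nonnegative $(n-1)$-th Ricci curvature. Lemma \ref{Lin} therefore applies to $M^n\subset\mathbb{R}^m$ and gives $\mbox{Ric}\geq -\frac{n}{4}|\Phi|^2$. Evaluating this on the unit radial vector $\partial r$ gives
\begin{equation*}
\{-\mbox{Ric}(\partial r,\partial r)\}_+\leq \frac{n}{4}|\Phi|^2 .
\end{equation*}
Raising both sides to the power $p/2$ (the map $s\mapsto s^{p/2}$ is monotone on $[0,\infty)$) gives $\{-\mbox{Ric}(\partial r,\partial r)\}_+^{p/2}\leq (n/4)^{p/2}|\Phi|^p$.

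Substituting this pointwise bound into the integral, we obtain (\ref{int-control}) with $C_2(n,p)=C_1(n,p/2)\,(n/4)^{p/2}$, which depends only on $n$ and $p$.

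The only points that need care are conventions rather than genuine obstacles. First, beyond the cut point, $\omega$ and the positive part are declared zero, which is the same convention used in Lemma \ref{lem4.1}, so the integrals match on both sides. Second, the pointwise Ricci inequality is used along each radial geodesic at smooth points, where $h=\triangle r$ and the Riccati inequality (\ref{Rac}) holds; these are precisely the points that enter the Petersen–Wei estimate.
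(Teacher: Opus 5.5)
Your proposal is correct and is exactly the argument the paper intends: the paper justifies this lemma only by saying it follows from combining Lemma \ref{Lin} with Lemma \ref{lem4.1}. Concretely, that means taking $\kappa=0$ and $q=p/2>n/2$ in the Petersen--Wei estimate and bounding $\{-\mbox{Ric}(\partial r,\partial r)\}_+^{p/2}\le (n/4)^{p/2}|\Phi|^p$, which is what you do, and your constant $C_2(n,p)=C_1(n,p/2)(n/4)^{p/2}$ makes explicit what the paper leaves implicit.
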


Using a method similar to \cite{Ku2}, we have the following volume growth estimate which has its own interest.
\begin{lemma} \label{volume-estimate}
Let $M^n$ be a complete non-compact  submanifold  in  $\mathbb{R}^{m}$.  Assume
\begin{align}
\underset{t \rightarrow \infty}\lim \frac{1}{V(t)}
\int_{B(t)} |\Phi|^p dv=0  \label{total-v2}
\end{align}
for some $p> n$. Then we have
\begin{align*}
\underset{t\rightarrow \infty}\lim \frac{d}{dt}
\log V(t)=0,
\end{align*}
 and for any $\epsilon>0$, there exists $t_0>0$ such that for any $t> t_0$,
 \begin{align*}
 V(t) \leq C(t_0) e^{\epsilon t} V(t_0)
\end{align*}
holds for some constant $C(t_0)>0$. That is, the volume of $M$ grows uniformly sub-exponentially. In particular, for any $\delta \in (0,1)$ and any sequence $\{b_i\}$ with $\underset{i\rightarrow \infty}\lim b_i=\infty$, there exists another sequence $\{a_i\}$ satisfying $V(a_i)=\delta V(b_i)$ and $\underset{i\rightarrow \infty}\lim (b_i-a_i)=\infty$.
\end{lemma}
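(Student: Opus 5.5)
We differentiate the volume in polar coordinates and control the mean curvature excess of geodesic spheres with Lemma \ref{Ricci-controp}. By the first variation,
\begin{align*}
V'(t)=\int_{S^{n-1}}\omega(t,\theta)\,d\theta_{n-1},\qquad
V''(t)=\int_{S^{n-1}} h(t,\theta)\,\omega(t,\theta)\,d\theta_{n-1}
\end{align*}
in the distributional sense, with the cut locus only decreasing $V''$. Hence
\begin{align*}
V''(t)\le \frac{n-1}{t}V'(t)+\int_{S^{n-1}}\{h-h_0\}_+\,\omega\,d\theta_{n-1}.
\end{align*}
Rather than work with $V''$ pointwise, I will integrate over $[t_1,t]$. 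This gives
\begin{align*}
V'(t)-V'(t_1)\le \int_{t_1}^t\frac{n-1}{s}V'(s)\,ds+\int_{B(t)}\{h-h_0\}_+\,dv .
\end{align*}
By H\"older and Lemma \ref{Ricci-controp} (integrated over $\theta$),
\begin{align*}
\int_{B(t)}\{h-h_0\}_+\,dv\le \Big(C_2\int_{B(t)}|\Phi|^p\,dv\Big)^{1/p}V(t)^{1-1/p}=o(1)\,V(t),
\end{align*}
by hypothesis (\ref{total-v2}).

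The next step is to turn this into a bound on $(\log V)'=V'/V$. For large $t$ the term $\frac{n-1}{s}V'(s)$ is at most $\delta V'(s)$ once $s>(n-1)/\delta$, so its integral contributes at most $\delta V(t)$ plus a constant. Dividing by $V(t)$, which tends to infinity or at least stays bounded below, gives
\begin{align*}
\limsup_{t\to\infty}\frac{V'(t)}{V(t)}\le \delta+o(1)+\frac{C}{V(t)}.
\end{align*}
If $V(t)\to\infty$ this yields $\lim (\log V)'=0$, since $V'\ge0$ gives the lower bound $0$. If $V$ stays bounded, then $V'(t)\le C$ and $\int V'<\infty$, but I still need $V'(t)\to 0$. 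For this I will use the same inequality on a window $[t-1,t]$: since $V(t)-V(t-1)\to0$, there are points $t_1\in[t-1,t]$ with $V'(t_1)$ small, and the bound then shows $V'(t)\le V'(t_1)+o(1)$. I expect this bookkeeping, together with justifying the distributional inequality for $V''$ across the cut locus (where $\omega$ is set to zero and jumps down), to be the main technical obstacle. The standard device, as in \cite{Ku2,PW}, is to work with the function $t\mapsto\omega(t,\theta)$ for each $\theta$, which satisfies $\omega'\le h\omega$ in the sense of distributions, and then integrate in $\theta$.

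Given $\lim_{t\to\infty}(\log V)'(t)=0$, choose $t_0$ with $(\log V)'\le\epsilon$ on $[t_0,\infty)$. Integrating gives $V(t)\le e^{\epsilon(t-t_0)}V(t_0)$, which is the sub-exponential bound with $C(t_0)=e^{-\epsilon t_0}$. The final claim follows from the intermediate value theorem: for $\delta\in(0,1)$ and $b_i\to\infty$, continuity of $V$ gives $a_i<b_i$ with $V(a_i)=\delta V(b_i)$. Then
\begin{align*}
\log(1/\delta)=\int_{a_i}^{b_i}(\log V)'\,dt\le (b_i-a_i)\sup_{[a_i,b_i]}(\log V)'.
\end{align*}
If $a_i\to\infty$, the supremum tends to $0$, forcing $b_i-a_i\to\infty$. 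If $a_i$ stays bounded along a subsequence, then $b_i-a_i\to\infty$ trivially. Here the limit of $(\log V)'$ is taken in the a.e.\ sense, which suffices because $\log V$ is locally absolutely continuous.
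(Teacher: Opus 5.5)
Your proof is correct, and its core is the paper's: integrate $\partial_r\omega=h\omega$ over $[s,t]\times S^{n-1}$, split $h=h_0+(h-h_0)$ with $h_0=(n-1)/r$ small for large $r$, bound $\int_{B(t)}\{h-h_0\}_+\,dv$ by H\"older and Lemma~\ref{Ricci-controp}, and divide by $V(t)$. The one genuinely different step is how you handle the leftover term $V'(t_1)/V(t)$ when $V$ might stay bounded.

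The paper excludes that case geometrically. If $\mbox{vol}(M)<\infty$, the hypothesis forces $\int_M|\Phi|^p\,dv=0$, so $M$ is totally umbilical. Being complete and non-compact, it is then an affine $n$-plane, which has infinite volume.

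You instead treat bounded $V$ analytically. You pick $t_1\in[t-1,t]$ with $V'(t_1)\le V(t)-V(t-1)$ and carry the smallness forward to $t$ with the same integrated inequality. This avoids the classification of totally umbilical submanifolds, which incidentally says nothing for $n=1$, where $\Phi\equiv 0$ for every curve. You are also more careful than the paper at the cut locus: you use $\omega'\le h\omega$ distributionally, whereas the paper writes an equality.

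What the paper's route buys is the by-product $\mbox{vol}(M)=\infty$. It is not part of the lemma's statement, but later results rely on it. Lemma~\ref{r-decay} and Theorem~\ref{thm-2} quote it from ``the proof of Lemma~\ref{volume-estimate}'', and it is what gives $a_i\to\infty$ in Lemma~\ref{h-decay}. With your proof, that fact would have to be supplied separately, for instance by the paper's umbilicity argument. Your case split on a possibly bounded $\{a_i\}$ is exactly what is needed once finite volume is not excluded.
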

\begin{proof}
For any $\epsilon>0$, we can take sufficiently large $s>0$ such that $h_0(r)< \epsilon$ for any $r> s$. Integrating the equation
$\partial_r \omega(r,\theta)= h(r, \theta) \omega(r,\theta)$ over $[s, t]\times S^{n-1}$
for any $t>s>0$, using (\ref{int-control}) and the H\"{o}lder inequality  yields
\begin{align*}
&\int_{S^{n-1}} \omega(t,\theta)d \theta -\int_{S^{n-1}} \omega(s,\theta)d \theta \\
&= \int_s^t dr \int_{S^{n-1}} h(r, \theta) \omega(r,\theta)d \theta\\
&= \int_s^t dr \int_{S^{n-1}} (h(r, \theta) -h_0(r)) \omega(r,\theta)d \theta + \int_s^t dr \int_{S^{n-1}} h_0(r) \omega(r,\theta)d \theta\\
&\leq \int_{B(t)} \{h(r, \theta) -h_0(r) \}_+ dv + \epsilon (V(t)- V(s))\\
& \leq V(t)\Big( \frac{1}{V(t)} \int_{B(t)} \{h(r, \theta) -h_0(r) \}_+^p dv    \Big)^{\frac{1}{p}}
    +\epsilon (V(t)- V(s))\\
& \leq C_3(n,p) V(t)\Big( \frac{1}{V(t)} \int_{B(t)} |\Phi|^p dv    \Big)^{\frac{1}{p}}
    +\epsilon (V(t)- V(s)).
\end{align*}
Hence,
\begin{align}
\nonumber \frac{d}{dt} \log V(t) =& \frac{1}{V(t)} \int_{S^{n-1}} \omega(t,\theta)d \theta\\
 \leq & \frac{1}{V(t)} \int_{S^{n-1}}\omega(s,\theta)d \theta
  + C_3(n,p)\Big( \frac{1}{V(t)} \int_{B(t)} |\Phi|^p dv    \Big)^{\frac{1}{p}} +\epsilon. \label{dlogv}
\end{align}
By (\ref{total-v2}) we claim that the volume of $M$ is infinite. If, arguing by contradiction, we assume that the volume of $M$ is finite. Then (\ref{total-v2}) implies that $\Phi=0$, which means that $M$ is a totally umbilical submanifold in $\mathbb{R}^{m}$. Since $M$ is complete non-compact, it follows that $M$ is an affine $n$-plane. This implies that $M$ has infinite volume, which leads to a contradiction. Therefore, considering the hypothesis (\ref{total-v2}) and  the fact that $\frac{d}{dt} \log V(t) \geq 0$, it follows from (\ref{dlogv}) that
\begin{align*}
\underset{t\rightarrow \infty}\lim \frac{d}{dt} \log V(t)=0.
\end{align*}
For any $\epsilon>0$, there exists $t_0 >s$ such that
\begin{align*}
 \frac{d}{dt} \log V(t) \leq \epsilon
\end{align*}
 for all $t>t_0$. Integrating this inequality over $[t_0, t]$ gives
\begin{align}
 \log \frac{V(t)}{V(t_0)} \leq \epsilon(t-t_0). \label{log}
\end{align}
That is,
\begin{align*}
 V(t) \leq e^{\epsilon(t-t_0)} V(t_0).
\end{align*}
Since the volume of $M$ is infinite, for any $\delta \in (0,1)$ and any sequence $\{b_i\}$ satisfying $\underset{i\rightarrow \infty}\lim b_i=\infty$, we can get another sequence $\{a_i\}$ satisfying $\underset{i\rightarrow \infty}\lim a_i=\infty$ and $V(a_i)=\delta V(b_i)$. By (\ref{log}), we have
\begin{align*}
 0< \log \frac{1}{\delta} = \log \frac{V(b_i)}{V(a_i)} \leq \epsilon (b_i -a_i)
\end{align*}
 for any $\epsilon>0$, which implies that $\underset{i\rightarrow \infty}\lim (b_i-a_i)=\infty$.
\end{proof}

\begin{lemma} \label{h-decay}
Let $M^n$ be a complete non-compact  submanifold  in  $\mathbb{R}^{m}$. Suppose that
\begin{align}
\underset{t \rightarrow \infty}\lim \frac{1}{V(t)}
\int_{B(t)} |\Phi|^p dv=0  \label{total-hd}
\end{align}
for some $p>n$. Then for any $\delta \in (0,1)$, there exists two sequences $\{a_i\}$ and $\{b_i\}$ satisfying $\underset{i\rightarrow \infty}\lim a_i=\underset{i\rightarrow \infty}\lim b_i=\infty$ and $V(a_i)=\delta V(b_i)$, such that
\begin{align*}
\underset{i \rightarrow \infty}{\lim} \frac{1}{V(b_i)- V(a_i)} \int_{B(b_i)\setminus B(a_i)}  |h(r, \theta)| dv =0.
\end{align*}
\end{lemma}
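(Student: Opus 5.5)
Split $|h| = 2h_+ - h$ and control the two pieces separately: $h_+$ by the Petersen--Wei type estimate of Lemma \ref{Ricci-controp}, and the signed integral of $h$ by the volume identity $\partial_r\omega = h\omega$. Since $\partial_r\omega = h\omega$ and $\omega$ is declared zero past the cut locus, the jump there only helps (it is a negative singular part of $\triangle r$). Hence, for $a<b$,
\begin{align*}
\int_{B(b)\setminus B(a)} h\, dv \;\geq\; -\int_{S^{n-1}}\omega(a,\theta)\,d\theta \;=\; -V'(a).
\end{align*}
More precisely, integrating $\partial_r\omega$ in $r$ gives $\int_{B(b)\setminus B(a)} h\,dv \ge V'(b)-V'(a) \ge -V'(a)$, where $V'(t)=\int_{S^{n-1}}\omega(t,\theta)d\theta$ a.e. This yields
\begin{align*}
\int_{B(b)\setminus B(a)} |h|\,dv \le 2\int_{B(b)\setminus B(a)} h_+\,dv + V'(a).
\end{align*}

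\emph{Estimating $h_+$.} Write $h_+\le \{h-h_0\}_+ + h_0$ with $h_0(t)=\frac{n-1}{t}\le \frac{n-1}{a}$ on the annulus. Hölder together with Lemma \ref{Ricci-controp}, integrated over $S^{n-1}$, gives
\begin{align*}
\int_{B(b)\setminus B(a)}\{h-h_0\}_+dv \le V(b)^{1-\frac1p}\Big(C_2\int_{B(b)}|\Phi|^p dv\Big)^{\frac1p} = C\,V(b)\Big(\frac{1}{V(b)}\int_{B(b)}|\Phi|^pdv\Big)^{\frac1p}.
\end{align*}
With $V(a)=\delta V(b)$ we have $V(b)-V(a)=(1-\delta)V(b)$. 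Dividing by this, the $h_+$ contribution is at most
\begin{align*}
\frac{C}{1-\delta}\Big(\frac{1}{V(b)}\int_{B(b)}|\Phi|^pdv\Big)^{1/p} + \frac{2(n-1)}{a},
\end{align*}
and both terms tend to $0$ as $b\to\infty$ by (\ref{total-hd}), provided $a\to\infty$.

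\emph{Choosing the sequences.} It remains to arrange that $V'(a_i)/V(b_i)\to 0$ while $V(a_i)=\delta V(b_i)$. By Lemma \ref{volume-estimate}, $\frac{d}{dt}\log V(t)\to 0$, i.e. $V'(t)/V(t)\to 0$. Since $V$ is continuous, increasing and unbounded (infinite volume, as shown in that lemma), for any $b_i\to\infty$ we can pick $a_i$ with $V(a_i)=\delta V(b_i)$, and then $a_i\to\infty$. This gives
\begin{align*}
\frac{V'(a_i)}{V(b_i)} = \delta\,\frac{V'(a_i)}{V(a_i)} \to 0.
\end{align*}
One technicality: $V'$ exists only a.e. and $\frac{d}{dt}\log V$ was bounded via (\ref{dlogv}) for all $t$ where defined. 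If needed, I would replace the pointwise $V'(a)$ by $\int_{S^{n-1}}\omega(a,\theta)d\theta$, which the estimate (\ref{dlogv}) in the proof of Lemma \ref{volume-estimate} controls for every large $a$.

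\emph{Main obstacle.} The delicate step is justifying the lower bound $\int h\,dv\ge -V'(a)$ across the cut locus, i.e. that only the positive part of $h$ needs the Petersen--Wei control. I would handle it in polar coordinates along each ray up to the cut point $c(\theta)$. There $\int_a^{\min(b,c(\theta))} h\omega\,dt = \omega(\min(b,c),\theta)-\omega(a,\theta) \ge -\omega(a,\theta)$, since $\omega\ge 0$. The quantity $h$ entering the lemma is the a.e.\ defined function $h(r,\theta)$ (not the distributional $\triangle r$), so this is legitimate.
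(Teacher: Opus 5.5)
Your proposal is correct and follows essentially the same route as the paper: you control $h_+$ via Lemma \ref{Ricci-controp} and H\"older, control the signed integral of $h$ via $\partial_r\omega=h\omega$ together with $\frac{d}{dt}\log V(t)\to 0$ from Lemma \ref{volume-estimate}, and combine the two using $|h|=2h_+-h$ along sequences with $V(a_i)=\delta V(b_i)$. Your one-sided bound $\int_{B(b)\setminus B(a)}h\,dv\ge -\int_{S^{n-1}}\omega(a,\theta)\,d\theta$ is slightly more careful than the paper's identity (\ref{integrate-part}), which ignores the downward jump of $\omega$ at cut points, and it also makes the $\omega(b_i,\cdot)$ term unnecessary.
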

\begin{proof}
For any $\delta \in (0,1)$, by Lemma \ref{volume-estimate}, there exists two sequences $\{a_i\}$ and $\{b_i\}$ satisfying $\underset{i\rightarrow \infty}\lim a_i=\underset{i\rightarrow \infty}\lim b_i=\infty$ and $V(a_i)=\delta V(b_i)$.
By Lemma \ref{Ricci-controp} we have
\begin{align}
\nonumber &\frac{1}{V(b_i)-V(a_i)} \int_{S^{n-1}} d\theta \int_{a_i}^{b_i}\{ h(r,\theta)-h_0 \}_{+}^{p}\omega(r,\theta)dr\\
\nonumber \leq & \frac{1}{(1-\delta)V(b_i)} \int_{S^{n-1}} d\theta \int_{0}^{b_i}\{ h(r, \theta) -h_0 \}_{+}^{p}\omega(r,\theta) dr\\
\leq&\frac{C_4(n,p)}{(1-\delta)V(b_i)}\int_{S^{n-1}} d\theta \int_{0}^{b_i}|\Phi|^{p}\omega(r,\theta) dr \label{M1}
\end{align}
for some positive constant $C_4(n,p)$.  By the hypothesis (\ref{total-hd}), the right hand side of (\ref{M1}) tends to zero as $i\rightarrow \infty$.  Using the fact that $\lim_{r\rightarrow \infty} h_0(r) =0$ and  the H\"{o}lder inequality, we have
\begin{align}
\nonumber &\frac{1}{V(b_i)-V(a_i)} \int_{B(b_i)\setminus B(a_i)} \{ h(r, \theta)\}_+ dv  \\
\leq& \Big ( \frac{1}{V(b_i)-V(a_i)} \int_{B(b_i)\setminus B(a_i)} \{ h(r, \theta)\}_+^{p} dv \Big)^{\frac{1}{p}} \rightarrow 0  \label{positive}
\end{align}
as $i \rightarrow \infty$. On the other hand, integrating the equation $ \int_{a_i}^{b_i} \partial_r\omega(r,\theta) dr =\omega(b_i,\theta) -\omega(a_i,\theta)$ over $S^{n-1}$ yields
\begin{align}
\nonumber &\frac{1}{V(b_i)-V(a_i)} \int_{B(b_i)\setminus B(a_i)} h(r,\theta)dv\\
  =& \frac{1}{V(b_i)-V(a_i)} \left(\int_{S^{n-1}}\omega(b_i,\theta) d\theta - \int_{S^{n-1}}\omega(a_i,\theta)d\theta\right). \label{integrate-part}
\end{align}
By Lemma \ref{volume-estimate}, we have
\begin{align*}
\nonumber 0 \leq
&\underset{i \rightarrow \infty}{\lim}\frac{1}{V(b_i)-V(a_i)}\int_{S^{n-1}}\omega(b_i,\theta) d\theta \\
\nonumber =&\underset{i \rightarrow \infty}{\lim}\frac{1}{(1-\delta)}\frac{1}{V(b_i)} \int_{S^{n-1}}\omega(b_i,\theta) d\theta \\
 \leq & \frac{1}{1-\delta}\underset{i\rightarrow \infty}\lim \frac{d}{dt} \log V(b_{i})
 =0.
\end{align*}
Similarly,
\begin{align*}
\nonumber \underset{i \rightarrow \infty}{\lim}\frac{1}{V(b_i)-V(a_i)} \int_{S^{n-1}}\omega(a_i,\theta) d\theta =0.
\end{align*}
Substituting into (\ref{integrate-part}) yields
\begin{align*}
\underset{i \rightarrow \infty}{\lim} \frac{1}{V(b_i)-V(a_i)} \int_{B(b_i)\setminus B(a_i)} h(r, \theta) dv=0. \label{1-positive}
\end{align*}
Combining with (\ref{positive}) gives
\begin{equation*}
\underset{i \rightarrow \infty}{\lim} \frac{1}{V(b_i)-V(a_i)} \int_{B(b_i)\setminus B(a_i)}  |h(r, \theta)| dv =0.
\end{equation*}
\end{proof}

Combining Lemma \ref{r-smooth} with Lemma \ref{h-decay}, we obtain the following lemma concerning the integral decay of the Laplacian of the smooth function $\widetilde{r}(x)$, as constructed in Lemma \ref{r-smooth}.
\begin{lemma}\label{r-decay}
Let $M^n$ be a complete non-compact  submanifold  in  $\mathbb{R}^{m}$. Suppose that
\begin{align*}
\underset{t \rightarrow \infty}\lim \frac{1}{V(t)}
\int_{B(t)} |\Phi|^p dv=0  \label{finite-total}
\end{align*}
for some $p>n$. Then  for any $\epsilon>0$, there exists $i_0>0$ such that for any $i> i_0$, we have
\begin{equation}
\int_{B(b_i)\setminus B(a_i)} |\triangle \widetilde{r}| dv \leq \epsilon V(b_i) \label{int-la}
\end{equation}
where $\{a_i\}$, $\{b_i\}$ are the sequences in Lemma \ref{h-decay}.
\end{lemma}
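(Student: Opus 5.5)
The bound will follow by combining part (c) of Lemma \ref{r-smooth} with Lemma \ref{h-decay}. The first step is to identify $\triangle r$ with $h(r,\theta)$ as densities. In exponential polar coordinates, $\triangle r = h(r,\theta)$ at smooth points, and the cut locus has measure zero. The distributional Laplacian of $r$ may carry a singular part on $\mbox{cut}(o)$. That part is nonpositive, since $\triangle r$ is bounded above by its absolutely continuous part in the sense of distributions. I will treat $\int |\triangle r|\,dv$ as the integral of the locally integrable function $h$ over the annulus, which is the convention Lemma \ref{r-smooth} already uses when it asserts that $\triangle r$ is locally integrable away from $o$. This gives
\begin{equation*}
\int_{B(b_i)\setminus B(a_i)} |\triangle r|\, dv = \int_{B(b_i)\setminus B(a_i)} |h(r,\theta)|\, dv .
\end{equation*}

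Next, I apply Lemma \ref{r-smooth}(c) with $a=a_i$ and $b=b_i$. This is allowed for $i$ large, because $a_i\to\infty$ ensures $a_i>1$. It yields
\begin{equation*}
\int_{B(b_i)\setminus B(a_i)} |\triangle \widetilde r|\, dv \le \int_{B(b_i)\setminus B(a_i)} |h|\, dv + 2\eta(a_i).
\end{equation*}
By Lemma \ref{h-decay}, for any $\epsilon>0$ there is $i_1$ such that for $i>i_1$ the first term is at most
\begin{equation*}
\frac{\epsilon}{2}\big(V(b_i)-V(a_i)\big) = \frac{\epsilon}{2}(1-\delta)V(b_i) \le \frac{\epsilon}{2}V(b_i).
\end{equation*}

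For the second term, I use two facts: $\eta(a_i)\to 0$ as $i\to\infty$, and $V(b_i)\to\infty$ because Lemma \ref{volume-estimate} shows $M$ has infinite volume. Hence there is $i_2$ such that for $i>i_2$ we have $2\eta(a_i)\le 2\eta(1)\le \frac{\epsilon}{2}V(b_i)$. Setting $i_0=\max\{i_1,i_2\}$ gives (\ref{int-la}).

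The only delicate point is the first step, namely that the integral of $|\triangle r|$ over the annulus really is controlled by the integral of $|h|$. I would handle this by working with the absolutely continuous part of $\triangle r$, which is the object the mollification estimates in Lemma \ref{r-smooth} actually compare against.
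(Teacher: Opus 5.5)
Your proposal is correct and is essentially the paper's own proof. You identify $\triangle r$ with $h(r,\theta)$ off the null set $\mbox{cut}(o)$, apply Lemma~\ref{r-smooth}(c) on $B(b_i)\setminus B(a_i)$, control the $|h|$ term by Lemma~\ref{h-decay}, and absorb $2\eta(a_i)$ using $V(b_i)\to\infty$; the paper splits $\epsilon$ into thirds and uses $V(b_i)\ge 1$, which is the same bookkeeping.

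One correction to your closing remark: mollification converges to the \emph{whole} distributional Laplacian, so if $\triangle r$ has a singular part, Lemma~\ref{r-smooth}(c) really involves its total variation, not just $\int|h|\,dv$. This is harmless. Since $\triangle r\le h\,dv$ as measures, and the total mass of $\triangle r$ on the annulus is the flux $\int_{S^{n-1}}\big(\omega(b_i,\theta)-\omega(a_i,\theta)\big)\,d\theta$, its total variation there equals $2\int h_+\,dv$ minus this flux. Both pieces are $o(V(b_i))$ by the estimates in the proof of Lemma~\ref{h-decay}.
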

\begin{proof}
For any $\epsilon>0$, using Lemma \ref{h-decay} and the property of $\eta(r)$ in Lemma \ref{r-smooth},
there exists $r_1>0$ such that for any $a_i>r_1$, we have
\begin{align}
\int_{B(b_i)\setminus B(a_i)} |h(r,\theta)| dv \leq \frac{\epsilon}{3} (V(b_i)-V(a_i))\ \ \mbox{and}\ \ \eta(a_i ) \leq \frac{\epsilon}{3}.   \label{eta}
\end{align}
 Since $h(r,\theta)= \triangle r$ at $M\setminus \{o, \mbox{cut}(o)\}$ and the measure of $\mbox{cut}(o)$ is zero, for any $a_i> r_1$, we get from Lemma \ref{r-smooth} and (\ref{eta}) that
\begin{align}
\nonumber\int_{B(b_i)\setminus B(a_i)} |\triangle \widetilde{r}| dv \leq& \int_{B(b_i)\setminus B(a_i)} |h(r,\theta)|dv +2\eta(a_i)\\
\nonumber\leq& \frac{\epsilon}{3} (V(b_i)-V(a_i))+ \frac{2\epsilon}{3}\\
\leq& \frac{\epsilon}{3} V(b_i)+\frac{2\epsilon}{3}.   \label{int-r}
\end{align}
Since  the volume of $M$ is infinite according to the proof of Lemma \ref{volume-estimate}, there exists $i_0$ such that whenever $i>i_0$,
\begin{align*}
1 \leq  V(b_i).
\end{align*}
Thus,  we conclude from (\ref{int-r}) that
\begin{equation*}
\int_{B(b_i)\setminus B(a_i)} |\triangle \widetilde{r}| dv  \leq \epsilon V(b_i)
\end{equation*}
for any $i>i_0$.
\end{proof}

Define the lower level set $\tilde{B}(t)$ of the smooth function $\tilde{r}$ by
\begin{equation*}
\tilde{B}(t)=\{x \in M, \tilde{r}(x)<t \}.
\end{equation*}
Now, by employing the lemmas presented above, we can proceed to prove our main theorem.
\begin{theorem} \label{thm-2}
Let $M^n$ be a complete non-compact submanifold in $\mathbb{R}^{m}$. Suppose that
\begin{align*}
\underset{t \rightarrow \infty}\lim \frac{1}{V(t)}
\int_{B(t)} |\Phi|^p dv=0  \label{ft}
\end{align*}
for some $p>n$. Then the $L^2$ essential spectrum of $M$ is $[0, +\infty)$.
\end{theorem}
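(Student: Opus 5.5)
The plan is to apply Lemma \ref{criteria} to test functions built from the smoothed intrinsic distance $\widetilde{r}$ of Lemma \ref{r-smooth}, using the annuli $B(b_i)\setminus B(a_i)$ from Lemma \ref{h-decay}. Fix $\lambda>0$ and $\delta\in(0,1)$ (say $\delta=1/2$), and let $\{a_i\},\{b_i\}$ be the sequences of Lemma \ref{h-decay}, so $V(a_i)=\delta V(b_i)$. By Lemma \ref{volume-estimate}, $b_i-a_i\to\infty$. Choose a cut-off $\phi_i$ of the real variable with $0\le\phi_i\le1$, $\phi_i\equiv1$ on $[a_i+L_i,\,b_i-L_i]$, support in $[a_i,b_i]$, and $|\phi_i'|+|\phi_i''|\le C/L_i$, where $L_i=(b_i-a_i)/4\to\infty$ (or a slower sequence if needed). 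Define
\begin{equation*}
u_i=\phi_i(\widetilde{r})\,e^{i\sqrt{\lambda}\,\widetilde{r}}.
\end{equation*}
By Lemma \ref{r-smooth}(a), $|\widetilde r-r|\le\eta(r)\le1$, so $\mathrm{supp}\,u_i$ lies in a slightly enlarged annulus $B(b_i+1)\setminus B(a_i-1)$, which eventually avoids any compact $K$. Also $\|u_i\|_{L^\infty}=1$.

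Next compute
\begin{equation*}
\triangle u_i+\lambda u_i=\Big[\phi_i''|\nabla\widetilde r|^2+2i\sqrt\lambda\,\phi_i'|\nabla\widetilde r|^2+(\phi_i'+i\sqrt\lambda\,\phi_i)\triangle\widetilde r\Big]e^{i\sqrt\lambda\widetilde r}+\lambda\phi_i\big(1-|\nabla\widetilde r|^2\big)e^{i\sqrt\lambda\widetilde r}.
\end{equation*}
Bound the pieces in $L^1$ as follows. The $\phi_i',\phi_i''$ terms are at most $C(\lambda)L_i^{-1}V(b_i+1)$, using $|\nabla\widetilde r|\le3$. The $\triangle\widetilde r$ term is at most $C(\lambda)\int_{B(b_i+1)\setminus B(a_i-1)}|\triangle\widetilde r|$, which Lemma \ref{r-decay} makes $\le\epsilon V(b_i)$. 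Here I would shift the sequences slightly, or apply Lemma \ref{r-smooth}(c) together with Lemma \ref{h-decay} on the enlarged annulus; the extra unit shells cost little since $\frac{d}{dt}\log V\to0$ gives $V(b_i+1)\le(1+o(1))V(b_i)$. For the last term, $|1-|\nabla\widetilde r|^2|\le 4|\nabla\widetilde r-\nabla r|$ a.e. because $|\nabla r|=1$ a.e. Lemma \ref{r-smooth}(b) then bounds its integral by $4\lambda\eta(a_i-1)\to0$, which is $\le\epsilon V(b_i)$ once $V(b_i)\ge1$; this holds eventually since the volume is infinite.

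For the lower bound, $\|u_i\|_{L^2}^2\ge \mathrm{vol}\big(B(b_i-L_i-1)\setminus B(a_i+L_i+1)\big)$. This is where I expect the main obstacle: the plateau region must carry a fixed fraction of $V(b_i)$, whereas Lemma \ref{h-decay} only controls $V(a_i)=\delta V(b_i)$. My first attempt is to take $L_i$ growing much slower than $b_i-a_i$, e.g. a fixed large $L$ chosen after $\epsilon$. The uniform sub-exponential estimate, in the form $\frac{d}{dt}\log V\le\epsilon'$ for large $t$, gives $V(b_i-L-1)\ge e^{-\epsilon'(L+1)}V(b_i)$ and $V(a_i+L+1)\le e^{\epsilon'(L+1)}\delta V(b_i)$. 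Choosing $\epsilon'$ small relative to $L$ makes the plateau volume $\ge\frac{1-\delta}{2}V(b_i)$. With $L$ fixed, the cut-off error is $C/L$, so I take $L\ge C/\epsilon$ first, then $\epsilon'$, then $i$ large.

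Combining the estimates yields $\|u_i\|_{L^\infty}\|\triangle u_i+\lambda u_i\|_{L^1}\le C\epsilon\|u_i\|_{L^2}^2$. Lemma \ref{criteria} then gives $\lambda\in\sigma_{ess}(M)$ for every $\lambda>0$, and since the essential spectrum is closed, $\sigma_{ess}(M)=[0,+\infty)$.
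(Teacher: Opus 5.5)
Your proposal is correct and follows essentially the same route as the paper: the same test functions $\phi_i(\widetilde r)e^{\sqrt{-1}\sqrt{\lambda}\widetilde r}$, Lemmas \ref{r-smooth}, \ref{r-decay} and \ref{volume-estimate}, and the criterion of Lemma \ref{criteria}. The differences are minor. To get a fixed fraction of $V(b_i)$ on the plateau, the paper uses intermediate radii $c_i,d_i$ with $V(a_i)=\frac12V(c_i)=\frac13V(d_i)=\frac14V(b_i)$ and a cut-off scale $R_i\to\infty$, whereas you use a fixed width $L$ chosen after $\epsilon$ together with $\frac{d}{dt}\log V\le\epsilon'$. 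The paper also places the support in $\{a_i+1\le\widetilde r\le b_i-1\}\subset B(b_i)\setminus B(a_i)$, so Lemma \ref{r-decay} applies directly and your enlarged-annulus issue does not arise.
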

\begin{proof}
Let $\widetilde{r}(x)$ be the smoothing function of $r(x)$ in Lemma \ref{r-smooth}. Let $\{a_i\}$ and $\{b_i\}$ be the sequences in Lemma \ref{h-decay} satisfying $\underset{r \rightarrow \infty}\lim a_i = \infty=\underset{r \rightarrow \infty}\lim b_i$ and $V(b_i)=4V(a_i)$. Choose  positive numbers $c_i$, $d_i$ satisfying
$a_i<c_i<d_i<b_i$ and $V(a_i)=\frac{1}{2}V(c_i)=\frac{1}{3}V(d_i)=\frac{1}{4}V(b_i)$. Then by Lemma \ref{volume-estimate}, we have
$\underset{r \rightarrow \infty}\lim (c_i-a_i) = \infty=\underset{r \rightarrow \infty}\lim (b_i-d_i)$.
For $i$ large, let $R_i =\min\{c_i-a_i, b_i-d_i\}-2>1$ and let $\phi_i$ be a cut-off function defined by
\begin{equation*}
\phi_i(t)= \begin{cases}
0  & \text{$t < \frac{a_i+1}{R_i}$},\\
1  & \text{ $\frac{a_i+1}{R_i}+1 \leq t \leq \frac{b_i-1}{R_i}-1$},\\
0   & \text{ $t > \frac{b_i-1}{R_i} $}
\end{cases}    \label{testf}
\end{equation*}
and $\phi_i \in [0,1]$ and $|\phi_i'| +|\phi_i''|$ is bounded.
For any given $\lambda > 0$, consider
\begin{equation*}
\psi_i= \phi_i \Big(\frac{\widetilde{r}}{R_i} \Big)e^{\sqrt{\lambda}\widetilde{r}\sqrt{-1}}.
\end{equation*}
Then
\begin{align*}
\nonumber \triangle \psi_i+ \lambda \psi_i=&  \Big[ \frac{1}{R_i^2}\phi_i''\big(\frac{\widetilde{r}}{R_i}\big)|\nabla \widetilde{r}|^2 + \sqrt{-1}\sqrt{\lambda}\frac{2}{R_i}\phi'_i\big(\frac{\widetilde{r}}{R_i}\big)|\nabla \widetilde{r}|^2 + \Big( \frac{1}{R_i}\phi'_i\big(\frac{\widetilde{r}}{R_i}\big)\\ &+\sqrt{-1}\sqrt{\lambda}\phi_i \big(\frac{\widetilde{r}}{R_i}\big)\Big)\triangle \widetilde{r} \Big] e^{\sqrt{-1}\sqrt{\lambda}\widetilde{r}}+ \lambda (1 -|\nabla \widetilde{r}|^2)\psi_i.
\end{align*}
Since $1 -|\nabla \widetilde{r}|^2 =\langle\nabla r +\nabla \widetilde{r}, \nabla r-\nabla \widetilde{r} \rangle \leq 4|\nabla \widetilde{r}-\nabla r| $, we have
\begin{align}
|\triangle \psi_i+ \lambda \psi_i |\leq \Big(\frac{C(\lambda)}{R_i} + C(\lambda) |\triangle \widetilde{r}| + C(\lambda)|\nabla \widetilde{r}-\nabla r|\Big)\chi  \label{point-estimate}
\end{align}
for some constant $C(\lambda)>0$, where $\chi$ is the characteristic function of $\tilde{B}(b_i-1)\setminus \tilde{B}(a_i+1)$. By Lemma \ref{r-smooth}, there exists $i_1\geq i_0$ such that for any $i\geq i_1$ we have
\begin{equation*}
\tilde{B}(b_i-1)\setminus \tilde{B}(a_i+1) \subset B(b_i)\setminus B(a_i).
\end{equation*}
For any $\epsilon>0$, by (\ref{int-la}) there exists  $i_2\geq i_1$  such that
\begin{equation}
\int_{B(b_i)\setminus B(a_i)}|\triangle \widetilde{r}| dv \leq \frac{\epsilon}{2C(\lambda)} V(b_i) \label{V-S}
\end{equation}
for any $i\geq i_2$.
Integrating (\ref{point-estimate}) over $M$ and using Lemma \ref{r-smooth} and (\ref{V-S}), we have
\begin{align}
\nonumber \int_M |\triangle \psi_i+ \lambda  \psi_i| dv  \leq & \frac{C(\lambda)}{R_i}(V(b_i)-  V(a_i))+ C(\lambda) \int_{B(b_i)\setminus B(a_i)}|\triangle \widetilde{r}| dv \\
\nonumber &+ C(\lambda)\eta(a_i)\\
\nonumber  \leq& \Big(\frac{C(\lambda)}{R_i} + \frac{\epsilon}{2} \Big) V(b_i)+ C(\lambda)\eta(a_i).
\end{align}
Since the volume of $M$ is infinite and $\eta(a_i)\rightarrow 0$ as $i\rightarrow \infty$,  we can choose $i_3\geq i_2$ such that
\begin{align}
 \int_M |\triangle \psi_i+ \lambda  \psi_i| dv  \leq \epsilon V(b_i) \label{up-volume}
\end{align}
for any $i \geq i_3$. On the other hand, by $R_i =\min\{c_i-a_i, b_i-d_i\}-2$ and
\begin{equation*}
\tilde{B}(b_i-R_i-1)\setminus \tilde{B}(a_i+R_i+1)\supset B(b_i-R_i-2)\setminus B(a_i+R_i+2)
\end{equation*}
for any $i \geq i_3$, we have
\begin{align}
\nonumber \|\psi_i\|_{L^2}^2 &\geq \mbox{vol}\big(\tilde{B}(b_i-R_i-1)\big)-\mbox{vol}\big(\tilde{B}(a_i+R_i+1)\big)\\
\nonumber &\geq V(b_i-R_i-2)-V(a_i+R_i+2)\\
\nonumber &\geq V(d_i)-V(c_i)\\
&=\frac{1}{4}V(b_i).\label{L2}
\end{align}
Hence, the relations (\ref{up-volume}) and (\ref{L2}) imply that
 \begin{align*}
 \int_M |\triangle \psi_i+ \lambda  \psi_i| dv  \leq 4\epsilon \|\psi_i\|_{L^2}^2.
\end{align*}
Since $\lambda>0$ is arbitrary, by Lemma \ref{criteria}
the $L^2$ essential spectrum of $M$ is $[0, +\infty)$.
\end{proof}

By applying  Theorem \ref{thm-2}, we immediately obtain the following result.
\begin{corollary} \label{total-mean}
Let $M^n$ be a complete non-compact submanifold in $\mathbb{R}^{m}$ with $\|A\|_{L^p} < \infty$ for some  $p>n$. Then the $L^2$ essential spectrum of $M$ is $[0, +\infty)$.
\end{corollary}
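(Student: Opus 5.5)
The plan is to reduce the corollary to Theorem \ref{thm-2}. That means checking the averaged decay hypothesis
\begin{equation*}
\lim_{t\to\infty}\frac{1}{V(t)}\int_{B(t)}|\Phi|^p\,dv=0
\end{equation*}
for the same exponent $p>n$. The key ingredient is infinite volume of $M$, which comes from the result of Cavalcante, Mirandola and Vit\'{o}rio \cite{CMV}.

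First, control $\Phi$ pointwise by $A$. From $|\Phi|^2=|A|^2-n|H|^2\le |A|^2$ we get $|\Phi|^p\le |A|^p$, hence
\begin{equation*}
\int_M|\Phi|^p\,dv\le \|A\|_{L^p}^p<\infty.
\end{equation*}

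Second, show $\mathrm{vol}(M)=\infty$. Since $|H|\le \frac{1}{\sqrt n}|A|$, we have $\|H\|_{L^p}\le \|A\|_{L^p}<\infty$ with $p>n$, so in particular $n\le p\le\infty$. The ambient space $\mathbb{R}^m$ clearly has bounded geometry. Therefore the theorem of \cite{CMV} quoted in the introduction applies and gives $V(t)\to\infty$ as $t\to\infty$.

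Third, combine the two facts:
\begin{equation*}
\frac{1}{V(t)}\int_{B(t)}|\Phi|^p\,dv\le \frac{\|A\|_{L^p}^p}{V(t)}\longrightarrow 0 .
\end{equation*}
Theorem \ref{thm-2} then yields that the $L^2$ essential spectrum of $M$ is $[0,+\infty)$.

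The only step that needs any care is the applicability of \cite{CMV}: its hypotheses (completeness, non-compactness, an isometric immersion into a manifold of bounded geometry, and an $L^p$ bound on $H$ with $n\le p\le\infty$) must all hold. They do, by the assumptions of the corollary. If one preferred to avoid \cite{CMV}, the fallback is the argument in the proof of Lemma \ref{volume-estimate}: finite volume would force $\Phi\equiv 0$ and hence $M$ an affine plane. However, that argument uses the averaged hypothesis, so citing \cite{CMV} directly is cleaner.
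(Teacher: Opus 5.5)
Your proposal is correct and is essentially the paper's own proof. It bounds $|\Phi|\le |A|$ and $|H|\le |A|/\sqrt{n}$, invokes \cite{CMV} to get infinite volume, and concludes that $\frac{1}{V(t)}\int_{B(t)}|\Phi|^p\,dv\le \|A\|_{L^p}^p/V(t)\to 0$, so Theorem~\ref{thm-2} applies.
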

\begin{proof}
Since $|\Phi|^2 \leq |A|^2$ and $n|H|^2 \leq |A|^2$, we have $\|H\|_{L^p} \leq \frac{1}{\sqrt{n}}\|A\|_{L^p} <\infty$. Thus, it follows from the result of \cite{CMV} that the volume of $M$ is infinite. Therefore,
\begin{align*}
\underset{t \rightarrow \infty}\lim \frac{1}{V(t)}
\int_{B(t)} |\Phi|^p dv\leq \underset{t \rightarrow \infty}\lim \frac{1}{V(t)}
\int_{B(t)} |A|^p dv=0, \label{ft}
\end{align*}
which concludes the proof by Theorem \ref{thm-2}.
\end{proof}

If the second fundamental form of a submanifold $M$ in $\mathbb{R}^{m}$ is bounded, the Gauss equation implies that the Ricci curvature of $M$ has a lower bound. Then Lemma \ref{Stc} and Corollary \ref{total-mean} give the following result.
\begin{corollary}
Let $M^n$ be a complete non-compact submanifold in $\mathbb{R}^{m}$ with bounded second fundamental form and $\|A\|_{L^q} < \infty$ for some  $q>n$. Then the $L^p$ essential spectrum of $M$ is $[0, +\infty)$ for all $p\in [1, +\infty]$.
\end{corollary}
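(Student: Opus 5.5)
The plan is to combine Sturm's result, Lemma \ref{Stc}, with Corollary \ref{total-mean}. Corollary \ref{total-mean} already gives the $L^2$ essential spectrum, and Lemma \ref{Stc} says all the $L^p$ spectra coincide. Two hypotheses of Lemma \ref{Stc} must be checked: a lower Ricci bound, and uniformly sub-exponential volume growth.

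\textbf{Ricci lower bound.} By the Gauss equation in $\mathbb{R}^m$,
\begin{equation*}
R_{ijkl}=h^\alpha_{ik}h^\alpha_{jl}-h^\alpha_{il}h^\alpha_{jk}.
\end{equation*}
Hence $\mbox{Ric}\geq -C(n)|A|^2$. Alternatively, Lemma \ref{Lin} gives $\mbox{Ric}\geq -\frac{n}{4}|\Phi|^2\geq -\frac{n}{4}|A|^2$. Since $|A|$ is bounded, Ricci is bounded below by a constant.

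\textbf{Volume growth.} Take $p=q>n$ in the earlier results. Because $\|H\|_{L^q}\leq n^{-1/2}\|A\|_{L^q}<\infty$, the result of \cite{CMV} shows $V(t)\to\infty$. Therefore
\begin{equation*}
\frac{1}{V(t)}\int_{B(t)}|\Phi|^q\,dv\leq \frac{\|A\|_{L^q}^q}{V(t)}\to 0,
\end{equation*}
which is exactly the hypothesis of Lemma \ref{volume-estimate}. That lemma then gives: for every $\epsilon>0$ there is $t_0$ with $V(t)\leq C(t_0)e^{\epsilon t}V(t_0)$ for $t>t_0$.

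This must be matched to the definition of uniform sub-exponential growth used in \cite{St}, which is stated relative to balls around arbitrary centers, or at least around a fixed base point with $V(1)$ on the right. For $t\leq t_0$, $V(t)\leq V(t_0)$ is trivially bounded. Also $V(t_0)/V(1)$ is a fixed constant. So $V(t)\leq c(\epsilon)e^{\epsilon t}V(1)$ holds for all $t$.

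\textbf{Main obstacle.} If Sturm's uniformity requires the estimate uniformly in the center $x$, I would transfer it via $B_x(t)\subset B_o(t+d(o,x))$. I would try to control $V_x(1)$ from below using the Ricci lower bound (Bishop–Gromov relative comparison) together with the bounded geometry coming from $|A|$ bounded: injectivity-radius-type local graph estimates for submanifolds with bounded $A$ give $V_x(1)\geq c>0$. This yields the uniform version up to constants that are absorbed. This uniformity check is the only nontrivial step.

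\textbf{Conclusion.} With both hypotheses verified, Lemma \ref{Stc} shows the $L^p$ spectra coincide for all $p\in[1,\infty]$. The $L^2$ spectrum is $[0,\infty)$, and it is entirely essential by Corollary \ref{total-mean}. Since the essential spectrum is closed and the whole spectrum is $[0,\infty)$, the $L^p$ essential spectrum is $[0,+\infty)$ as well, granting that the essential parts also agree, as in \cite{Wa} and \cite{LZ}.
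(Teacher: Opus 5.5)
Your main line is the paper's. Its entire proof is the remark that bounded $A$ and the Gauss equation give $\mathrm{Ric}\ge -K$, after which Lemma \ref{Stc} and Corollary \ref{total-mean} apply. The sub-exponential growth comes, implicitly, from Lemma \ref{volume-estimate}, exactly as you derive it. The paper uses the base-point notion of uniformly sub-exponential growth from its introduction and never raises center-uniformity. So under its formulation of Lemma \ref{Stc}, your first two steps already finish the proof.

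The step you single out as the only nontrivial one, however, fails as written. The inclusion $B_x(t)\subset B_o(t+d(o,x))$ only gives $\mathrm{vol}\,B_x(t)\le c(\epsilon)e^{\epsilon t}e^{\epsilon d(o,x)}V(1)$. Absorbing the factor $e^{\epsilon d(o,x)}$ would require $\mathrm{vol}\,B_x(1)\gtrsim e^{\epsilon d(o,x)}$, which is impossible because $\mathrm{vol}\,B_x(1)$ is bounded above by Bishop--Gromov; a lower bound on $\mathrm{vol}\,B_x(1)$ does not help. The inclusion is useful only for $t\gtrsim d(o,x)$. For $1\ll t\ll d(o,x)$, the Ricci bound alone gives exponential growth at the fixed rate $(n-1)\sqrt{k}$ (if $\mathrm{Ric}\ge-(n-1)k$), not sub-exponential growth.

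If you want Sturm's center-uniform hypothesis, rerun the proof of Lemma \ref{volume-estimate} centered at an arbitrary $x$:
\begin{itemize}
\item Lemma \ref{Ricci-controp} holds at every center with the same constant, and $\int_{B_x(t)}|\Phi|^q\,dv\le\|A\|_{L^q}^q$.
\item The term $\int_{S^{n-1}}\omega_x(s,\theta)\,d\theta$ is bounded by $C(n,k,s)$ by comparison.
\item Bounded $|A|$ plus a geodesic ray from $x$ gives $\mathrm{vol}\,B_x(t)\ge c\,t$ for $t\ge 1$, uniformly in $x$. Pack about $t/(2r_0)$ disjoint balls of fixed radius $r_0\sim 1/\sup|A|$ along the ray; each has volume at least $c\,r_0^n$ by the local graph representation you already invoke.
\end{itemize}
Then $\frac{d}{dt}\log \mathrm{vol}\,B_x(t)\le 3\epsilon$ for $t\ge T(\epsilon)$, with $T$ independent of $x$. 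Relative volume comparison handles $t\le T$.

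Your closing caveat is unnecessary. An $L^p$ spectrum equal to $[0,\infty)$ has no isolated points, hence no discrete part, so it is entirely essential.
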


\begin{remark}
For a minimal submanifold $M$ in $\mathbb{R}^{m}$ with $\|A\|_{L^n}< \infty$, it follows from the result of \cite{An} that $\underset{r(x) \rightarrow \infty}{\lim} |A|(x) =0$. Thus the Gauss equation implies that $\underset{x \rightarrow \infty}{\liminf} \mbox{Ric}_M(x)=0$. By the main result of \cite{LZ} the $L^p$ essential spectrum of $M$ is $[0, +\infty)$ for all $p\in [1, +\infty]$.
\end{remark}
 Finally, we have the following spectral Bernstein type result for submanifolds under a curvature pinching assumption.
\begin{corollary}
Let $M^n$ be a complete non-compact submanifold in $\mathbb{R}^{m}$.
 Assume that
\begin{equation*}
  (n-1) |A|^2 \leq n^2|H|^2.
\end{equation*}
 Then the $L^p$ essential spectrum of $M$ is $[0, +\infty)$ for all $p \in [1, +\infty]$.
\end{corollary}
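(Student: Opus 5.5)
The pinching condition $(n-1)|A|^2 \leq n^2|H|^2$ gives $\mbox{Ric}\geq 0$ by the second part of Lemma \ref{Lin}, since $\mathbb{R}^m$ is flat and so has nonnegative $(n-1)$-th Ricci curvature. I will use this nonnegativity twice: once to get the $L^2$ essential spectrum, and once to transfer it to every $L^p$ through Lemma \ref{Stc}.

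For the $L^2$ statement there are two routes. The most direct is the result of Lu and Zhou \cite{LZ} quoted in the introduction: when $\liminf_{x\to\infty}\mbox{Ric}_M(x)=0$, which certainly holds if $\mbox{Ric}\geq 0$, the $L^p$ essential spectrum is $[0,+\infty)$ for all $p\in[1,+\infty]$. That settles the corollary at once, including all $p$. To stay inside the paper's own machinery, I would instead use Theorem \ref{thm-2}. With $\mbox{Ric}\geq 0$, the Petersen--Wei comparison (\ref{Rac}) gives $h(t,\theta)\leq h_0(t)=\frac{n-1}{t}$ pointwise. I would then rerun the proofs of Lemmas \ref{volume-estimate}, \ref{h-decay} and \ref{r-decay} with the term $\{h-h_0\}_+$ set to zero instead of bounded by $\int|\Phi|^p$. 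The only input those proofs used from the $\Phi$ hypothesis was that this term was negligible, and here it vanishes identically. Two further inputs are needed:
\begin{itemize}
\item Infinite volume, which holds for any complete noncompact manifold with $\mbox{Ric}\geq 0$ by Yau/Calabi.
\item Sub-exponential growth, which follows from Bishop--Gromov: $V(t)\leq \omega_n t^n$, so $\frac{d}{dt}\log V(t)\to 0$. This last step still requires an argument, since Bishop--Gromov gives $\frac{d}{dt}\log V \leq \frac{n}{t}$ only in the averaged sense. The proof of Lemma \ref{volume-estimate} gives it anyway via $\int_{S^{n-1}}\omega(t,\theta)\,d\theta \leq \int\omega(s,\theta)\,d\theta + \epsilon V(t)$.
\end{itemize}
With these in place, the test functions $\psi_i$ from the proof of Theorem \ref{thm-2} work verbatim, and Lemma \ref{criteria} gives $\sigma_{ess}=[0,+\infty)$ in $L^2$.

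For general $p$, the volume of $M$ grows uniformly sub-exponentially: Bishop--Gromov gives $V(r)\leq C r^n V(1)\leq c(\epsilon)e^{\epsilon r}V(1)$. Together with the Ricci lower bound $0$, Lemma \ref{Stc} says the $L^p$ spectra coincide for all $p\in[1,\infty]$. It remains to pass from equality of spectra to equality of essential spectra. The $L^2$ spectrum is $[0,\infty)$, a half line with no isolated points, so the $L^p$ spectrum is that same half line. Every point of it is then an accumulation point, so the $L^p$ essential spectrum is also $[0,+\infty)$.

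The main obstacle is this last identification, namely making sure Sturm's equality of spectra transfers to essential spectra without additional hypotheses. It does, because the spectrum contains no isolated points. A secondary point is verifying that the uniform sub-exponential growth holds in Sturm's sense, which the polynomial bound $V(r)\leq C r^n V(1)$ handles uniformly in the base point.
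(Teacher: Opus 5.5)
Your first route is the paper's proof. The paper also gets $\mbox{Ric}\geq 0$ from the second part of Lemma \ref{Lin}, then cites Wang's theorem (Theorem 3 there, which covers $\mbox{Ric}\geq -\delta(n)/r^2$). The Lu--Zhou result you quote extends that theorem, so either citation settles all $p\in[1,+\infty]$ at once.

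The route you say you would actually use is genuinely different, and it is sound. You reprove the $L^2$ statement by rerunning Section 4 with $\{h-h_0\}_+\equiv 0$, which comes from Laplacian comparison, in place of the $\Phi$-hypothesis. Calabi--Yau supplies infinite volume and Bishop--Gromov supplies sub-exponential growth. You then pass to every $p$ through Lemma \ref{Stc}, noting that a spectrum equal to $[0,+\infty)$ has no isolated points and so coincides with the essential spectrum. This essentially reconstructs the structure of Wang's own argument: explicit test functions plus Sturm's $p$-independence.

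What your route buys is self-containment relative to the paper. It also shows that the $\Phi$-hypothesis enters Theorem \ref{thm-2} only through two things: the smallness of $\{h-h_0\}_+$ and infinite volume.

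The price is that it inherits the technical fine print of Lemmas \ref{r-smooth} and \ref{r-decay}. For instance, the distributional $\triangle r$ carries a nonpositive singular part on $\mbox{cut}(o)$. That part has to be controlled, for example through the flux $\int_{S^{n-1}}\omega\,d\theta$, rather than dropped on the grounds that $\mbox{cut}(o)$ has measure zero. The one-line citation avoids this issue entirely.

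One small correction to your volume step. Bishop--Gromov is not merely an averaged statement here. Monotonicity of $\omega(t,\theta)/t^{n-1}$, which follows from your own bound $h\leq (n-1)/t$, gives $\int_{S^{n-1}}\omega(t,\theta)\,d\theta\leq \frac{n}{t}V(t)$ for every $t$. Hence $\frac{d}{dt}\log V\leq \frac{n}{t}\to 0$ directly, without rerunning Lemma \ref{volume-estimate}.
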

\begin{proof}
By Lemma \ref{Lin}, the assumption $(n-1) |A|^2 \leq n^2|H|^2$ implies that the Ricci curvature of $M$ is nonnegative. Therefore, the proof follows from Theorem 3 of  \cite{Wa}.
\end{proof}

\vspace{0.5cm}
\noindent $\mathbf{Acknowledgements}$.
The authors would like to thank the referees for the very valuable comments and
detailed corrections.\\

\noindent \textbf{Data Availability} Our manuscript has no associate data.\\

\noindent \textbf{Declarations}\\

\noindent \textbf{Competing interests} On behalf of all authors, the corresponding author states that there is no conflict of interest.\\

Yuxin Dong, School of Mathematical Sciences and
Laboratory of Mathematics for Nonlinear Science,
Fudan University, Shanghai, 200433, China.

E-mail address: yxdong@fudan.edu.cn\\

Hezi Lin, School of Mathematics and Statistics \& Key Laboratory of Analytical Mathematics and Applications (Ministry of Education) \& FJKLAMA, Fujian Normal University, Fuzhou 350117, China.

E-mail address: lhz1@fjnu.edu.cn\\

Wei Zhang, School of Mathematics, South China University of Technology, Guangzhou 510641, China.

E-mail address: sczhangw@scut.edu.cn

%\end{CJK*}
\end{document}